\newcommand\cyr
\renewcommand\rmdefault{wncyr}
\renewcommand\sfdefault{wncyss}
\renewcommand\encodingdefault{OT2}
\DeclareTextFontCommand{\textcyr}{\cyr}
\renewcommand{\geq}{\geqslant}	
\renewcommand{\leq}{\leqslant}
\def\mid|{\hs\middle|\hs}
\def\hs{\hspace*{0.1cm}}
\def\1{\mathbb{1}}
\newcommand{\sub}{\subseteq}	
\renewcommand{\1}{\mathds{1}}					
\newcommand{\I}{\mathcal{I}}
\newcommand{\NN}{\mathbb{N}}	
\newcommand{\RR}{\mathbb{R}} 
\newcommand{\ZZ}{\mathbb{Z}} 
\def\t{\textnormal}
\newcommand{\ohne}{\backslash}
\newcommand{\disp}{\displaystyle}		
\def\c{\cite }
\newcommand{\hide}[1]{}  
\def\phi{\varphi}
\def\rho{\varrho}
\DeclareMathAlphabet\mathbfcal{OMS}{cmsy}{b}{n}
\def\dd{^\textnormal{dd}}
\newcommand{\Inf}{\wedge}	%
\let\int\relax 
\DeclareMathOperator{\int}{int}
\renewcommand{\theequation}{\arabic{equation}}
\newcounter{Zaehler}
\theoremstyle{plain}
\newtheorem{theorem}{Theorem}
\newtheorem{lemma}[theorem]{Lemma}
\newtheorem{corollary}[theorem]{Corollary}
\newtheorem{proposition}[theorem]{Proposition}
\theoremstyle{plain}
\newtheorem{example}[theorem]{Example}
\newtheorem*{remark}{Remark}
\newtheorem{numremark}[theorem]{Remark}
\newcommand{\eqnum}{\leavevmode\hfill\refstepcounter{equation}\textup{\tagform@{\theequation}}}
\theoremstyle{nonumberplain}
\newtheorem{proof}{Proof}
\begin{document}
\title{Order continuous operators on pre-Riesz spaces and embeddings}
\author{Anke Kalauch, Helena Malinowski}
\maketitle
\begin{abstract}
We investigate properties of order continuous operators on pre-Riesz spaces with respect to the embedding of the range space into a vector lattice cover or, in particular, into its Dedekind completion. We show that order continuity is preserved under this embedding for positive operators, but not in general. 

For the vector lattice $\ell_0^\infty$ of eventually constant sequences, we consider the pre-Riesz space of regular operators on $\ell_0^\infty$ and show that making the range space Dedekind complete does not provide a vector lattice cover of the pre-Riesz space. 
A similar counterexample is obtained for the directed part of the space of order continuous operators on $\ell_0^\infty$.
\end{abstract}

\textbf{Keywords:} order continuous, order continuous operator, pre-Riesz space, vector lattice cover, regular operator, Dedekind completion, extension operator, eventually constant sequence

\textbf{Mathematics Subject Classification (2010):} 46A40, 06F20, 47B37, 47B60


\section{Introduction}
For vector lattices $X$ and $Y$ with $Y$ Dedekind complete, the classical Riesz-Kantorovich theorem\footnote{See Theorem \ref{prelim.53} below.} states that the space $L_r(X,Y)$ of regular operators is a vector lattice. Moreover, Ogasawara showed that the space $L_{oc}(X,Y)$ of order continuous operators is a band in $L_r(X,Y)$ and, hence, a vector lattice\footnote{See Theorem \ref{prelim.1004} below.}. If $Y$ is an Archimedean vector lattice that lacks the Dedekind completeness, then $L_r(X,Y)$ and the directed part $L_{oc}^\diamond(X,Y)$ of $L_{oc}(X,Y)$ turn out to be Archimedean pre-Riesz spaces. Pre-Riesz spaces are exactly those partially ordered vector spaces that can be order densely embedded into vector lattices, their so-called vector lattice covers. For the pre-Riesz spaces of operators in question, one can ask up to which extent properties of $L_r(X,Y)$ and $L_{oc}^\diamond(X,Y)$ are reflected by the vector lattices $L_r(X,Y^\delta)$ and $L_{oc}(X,Y^\delta)$, respectively, where $Y^\delta$ denotes the Dedekind completion of $Y$. 

The paper is organized as follows. Section 2 contains all preliminary statements. 
In Section 3 we investigate order continuous operators in view of the extended range space. It turns out that the extension of the range space preserves the order continuity of an operator, whereas the similar statement for the restriction is only true for positive operators.  
In Section 4 we deal with the idea that at first glance it would make sense to establish the vector lattice $L_r(X,Y^\delta)$  as a vector lattice cover of $L_r(X,Y)$, and, similarly, $L_{oc}(X,Y^\delta)$ of $L_{oc}^\diamond(X,Y)$. We show that this approach fails, even if we take the vector lattice $X=Y=l_0^\infty$ of eventually constant sequences as the underlying space, which has nice additional properties as, e.g., an algebraic basis.

Both parts of the present paper indicate that a straightforward embedding of the range space into its Dedekind completion might essentially enlarge the space of operators, such that properties are not transferred appropriately. Thus vector lattice theory for spaces of operators can not be applied directly to pre-Riesz spaces of operators, and a theory of the latter is expected to be more involved.

\section{Preliminaries}
Let $X$ be a real vector space and let $X_+$ be a \emph{cone} in $X$, that is,  $X_+$ is a wedge ($x,y\in X_+$ and $\lambda,\mu\geq 0$ imply $\lambda x + \mu y \in X_+$) and $X_+ \cap (-X_+) =\left\{0\right\}$. In $X$ a partial order is defined by $x\leq y$ whenever $y-x\in X_+$. 
The space $(X,X_+)$ (or, loosely $X$) is then called a (\emph{partially}) \emph{ordered vector space}.
For a linear subspace $D$ of $X$ we
consider in $D$ the order induced from $X$, i.e.\ we set $D_+:=D\cap X_+$.

An ordered vector space $X$ is called \emph{Archimedean} if for every $x,y\in X$ with $nx\leq y$ for every $n\in\NN$ one has $x\leq 0$. 
Clearly, every subspace of an Archimedean ordered vector space is Archimedean. A subspace $D\sub X$ is called \emph{directed} if for every $x,y\in D$ there is an element $z\in D$ such that $x,y\leq z$.
An ordered vector space $X$ is directed if and only if  $X_+$ is \emph{generating} in $X$, that is, $X = X_+ - X_+$. A linear subspace $D$ of $X$ is \emph{majorizing} in $X$ if for every $x\in X$ there exists $d\in D$ with $x\leq d$. An ordered vector space $X$ has the \emph{Riesz decomposition property} (\emph{RDP}) if for every $x_1,x_2,z \in X_+$ with $z \leq x_1+x_2$ there exist $z_1,z_2 \in X_+$ such that $z = z_1+z_2$ with $z_1 \leq x_1$ and $z_2\leq x_2$. 
For a subset $M\sub X$ denote the set of all upper bounds of $M$ by $M^u=\left\{x\in X\mid| \forall m\in M\colon m\leq x\right\}$.

For standard notations in the case that $X$ is a vector lattice see \c{PosOp}. Recall that a vector lattice $X$ is \emph{Dedekind complete} if every non-empty subset of $X$ that is bounded above has a supremum.

Next we define disjointness, bands and ideals in ordered vector spaces.
The elements $x,y\in X$ are called \emph{disjoint}, in symbols $x\perp y$, if $\left\{x+y,-x-y\right\}^u = \left\{x-y,-x+y\right\}^u$, for motivation and details see \c{1}. If $X$ is a vector lattice, then this notion of disjointness coincides with the usual one, see \c[Theorem~1.4(4)]{PosOp}. Let $Y$ be an ordered vector space, $X$ an order dense subspace of $Y$, and $x,y\in X$. Then the disjointness notions in $X$ and $Y$ coincide, i.e.\ $x\perp y$ in $X$ holds if and only if $x\perp y$ in $Y$, see \c[Proposition~2.1(ii)]{1}.
The \emph{disjoint complement} of a subset $M\sub X$ is $M^{\t{d}} := \left\{x\in X \mid| \forall y\in M\colon x\perp y\right\}$. A linear subspace $B$ of an ordered vector space $X$ is called a \emph{band} in $X$ if $B\dd = B$, see \c[Definition~5.4]{1}. If $X$ is an Archimedean vector lattice, then this notion of a band coincides with the classical notion of a band in vector lattices (where a band is defined to be an order closed ideal). For every subset $M\sub X$ the disjoint complement $M^{\t{d}}$ is a band, see \c[Proposition~5.5]{1}. In particular, we have $M^{\t{d}} = M^{\t{ddd}}$.

The following notion of an ideal is introduced in \c[Definition~3.1]{vanGaa}. A subset $M$ of an ordered vector space $X$ is called \emph{solid} if for every $x\in X$ and $y\in M$ the relation
$\left\{x,-x\right\}^u \supseteq \left\{y,-y\right\}^u$ implies $x\in M$.
A solid subspace of $X$ is called an \emph{ideal}. This notion of an ideal coincides with the classical definition, provided $X$ is a vector lattice. For a subset $M\sub X$
the set $\mathcal{I}_M:= \bigcap \left\{I\sub X\mid| I \t{ is an ideal in } X \t{ with } M\sub I\right\}$
is an ideal, called the \emph{ideal generated by} $M$.

We call a linear subspace $D$ of an ordered vector space $X$ \emph{order dense} in $X$ if for every $x\in X$ we have \[x = \inf\left\{z\in D \mid| x\leq z\right\},\] that is, the greatest lower bound of the set $\left\{z\in D \mid| x\leq z\right\}$ exists in $X$ and equals $x$, see \c[p.~360]{113}. Clearly, if $D$ is order dense in $X$, then $D$ is majorizing in $X$.
Recall that a linear map $i\colon X\rightarrow Y$, where $X$ and $Y$ are ordered vector spaces, is called \emph{bipositive} if for every $x\in X$ one has $i(x) \geq 0$ if and only if $x\geq 0$. Every bipositive linear map is injective. 
In \c{vanHaa} the theory of pre-Riesz spaces is developed by van Haandel.
We call an ordered vector space $X$ a \emph{pre-Riesz space} if there exist a vector lattice $Y$ and a bipositive linear map $i\colon X\rightarrow Y$ such that $i(X)$ is order dense in $Y$. In this case, $(Y,i)$ is called a \emph{vector lattice cover} of $X$. 
If $X$ is a subspace of $Y$ and $i$ is the inclusion map, we write briefly $Y$ for $(Y,i)$.
By \c[Theorem 17.1]{vanHaa} every Archimedean directed ordered vector space is a pre-Riesz space. Moreover, every pre-Riesz space is directed.
If $X$ is an Archimedean directed ordered vector space, then every vector lattice cover of $X$ is Archimedean.
For the following result see \c[Corollary~5]{02}.
\begin{lemma}\label{properties.23y}
	Let $X$ be a pre-Riesz space, $(Y,i)$ a vector lattice cover of $X$ and $S\sub X$ a non-empty subset.
	\begin{enumerate}
		\item\label{properties.23y.eq1} If $\sup S$ exists in $X$, then $\sup i(S)$ exists in $Y$ and we have $\sup i(S) = i(\sup S)$.
		\item\label{properties.23y.eq2} If $\sup i(S)$ exists in $Y$ and $\sup i(S)\in i(X)$, then $\sup S$ exists in $X$ and we have $\sup i(S) = i(\sup S)$.
	\end{enumerate}
\end{lemma}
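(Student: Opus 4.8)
The plan is to exploit two features of a vector lattice cover: first, that the embedding $i$ is bipositive and therefore reflects the order, i.e.\ for $x,y\in X$ one has $x\leq y$ if and only if $i(x)\leq i(y)$ (apply bipositivity to $y-x$ and use linearity); and second, that $i(X)$ is order dense in $Y$, so that every $y\in Y$ satisfies $y=\inf\left\{z\in i(X)\mid| y\leq z\right\}$. Part \ref{properties.23y.eq2} uses only the first feature, whereas part \ref{properties.23y.eq1} genuinely needs order density. In both parts I would write $s$ for the relevant supremum and verify separately that the candidate is an upper bound and that it is the least one.

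For part \ref{properties.23y.eq2}, suppose $\sup i(S)=i(s)$ for some $s\in X$. If $x\in S$ then $i(x)\leq i(s)$, so $x\leq s$ by bipositivity; thus $s$ is an upper bound of $S$ in $X$. Conversely, if $w\in X$ satisfies $x\leq w$ for all $x\in S$, then $i(x)\leq i(w)$ for all such $x$, so $i(w)$ is an upper bound of $i(S)$ in $Y$; since $i(s)=\sup i(S)$ we get $i(s)\leq i(w)$ and hence $s\leq w$. Therefore $s=\sup S$, and the displayed equality is immediate.

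For part \ref{properties.23y.eq1}, let $s=\sup S$ in $X$. Since $x\leq s$ for every $x\in S$, positivity of $i$ gives $i(x)\leq i(s)$, so $i(s)$ lies in $i(S)^u$. The crux is to show $i(s)$ is the least upper bound. I would take an arbitrary upper bound $y\in Y$ of $i(S)$ and aim at $i(s)\leq y$. Fix any $z\in i(X)$ with $y\leq z$, say $z=i(w)$ with $w\in X$. Then $i(x)\leq y\leq i(w)$ for every $x\in S$, so $x\leq w$ by bipositivity, i.e.\ $w\in S^u$; hence $s\leq w$ and $i(s)\leq i(w)=z$. Thus $i(s)$ is a lower bound of the set $\left\{z\in i(X)\mid| y\leq z\right\}$, whose infimum is $y$ by order density, so $i(s)\leq y$, as required.

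The one step that carries the real content is this last reduction: an arbitrary upper bound $y$ of $i(S)$ in $Y$ need not lie in $i(X)$, so one cannot pull it straight back to $X$ through bipositivity. Order density is precisely what bridges this gap, letting me compare $i(s)$ with $y$ via the elements of $i(X)$ sitting above $y$. Everything else is a routine application of the order-reflecting property of the embedding.
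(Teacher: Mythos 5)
Your proof is correct: part (ii) uses only bipositivity, and in part (i) you correctly identify and handle the crux, namely that an arbitrary upper bound $y \in Y$ of $i(S)$ need not lie in $i(X)$, bridging the gap via order density by showing $i(\sup S)$ is a lower bound of $\left\{z\in i(X)\mid| y\leq z\right\}$, whose infimum is $y$. Note that the paper does not prove this lemma itself but cites it from the reference \cite{02}; your argument is the standard one this citation refers to, so there is no substantive difference in approach to report.
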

Clearly, Lemma~\ref{properties.23y} can be analogously formulated for infima instead of suprema.

By \c[Chapter~X.3]{VulikhWeber} every Archimedean directed  ordered vector space $X$ has a unique  Dedekind completion, which we denote by $X^\delta$. Clearly, $X^\delta$ is a vector lattice cover of $X$.
A combination of \cite[Theorem~4.14]{vanHaa} and \cite[Theorem~4.5]{vanHaa} yields the following result.
\begin{theorem}\label{properties.20}
	Let $Y$ be an Archimedean directed ordered vector space and $X$ a directed order dense subspace of $Y$. Then the Dedekind completions of $X$ and $Y$ are order isomorphic vector spaces, i.e.\ we can identify $X^\delta = Y^\delta$.
\end{theorem}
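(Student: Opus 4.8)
The plan is to realize the Dedekind completion $Y^\delta$ of $Y$ as a Dedekind completion of $X$ as well; the claimed identification $X^\delta = Y^\delta$ then follows at once from the uniqueness of the Dedekind completion recorded in \cite{VulikhWeber}. Note first that, since $X$ is directed and sits inside the Archimedean space $Y$, it is itself Archimedean and directed, hence a pre-Riesz space with a Dedekind completion $X^\delta$; likewise $Y^\delta$ exists and, being Dedekind complete, is an Archimedean vector lattice cover of $Y$. Thus the argument reduces to two points: (i) that $X$ is order dense not merely in $Y$ but in the larger space $Y^\delta$, and (ii) that order density into a Dedekind complete vector lattice already pins down the Dedekind completion.

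For (i) I would prove transitivity of order density directly along the chain $X \subseteq Y \subseteq Y^\delta$, in which $(Y^\delta,\mathrm{incl})$ is a vector lattice cover of $Y$. Majorizing-ness is transitive, so $X$ is majorizing in $Y^\delta$ and the sets below are non-empty. Fix $u \in Y^\delta$ and let $w \in Y^\delta$ be any lower bound of $\{x \in X : u \leq x\}$. For each $y \in Y$ with $y \geq u$ one has $\{x \in X : y \leq x\} \subseteq \{x \in X : u \leq x\}$, so $w$ is a lower bound of the smaller set; order density of $X$ in $Y$ gives $y = \inf\{x \in X : y \leq x\}$, an infimum that persists in $Y^\delta$ by the infimum analogue of Lemma~\ref{properties.23y}, whence $w \leq y$. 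As this holds for every $y \in Y$ above $u$, and order density of $Y$ in $Y^\delta$ gives $u = \inf\{y \in Y : u \leq y\}$, I conclude $w \leq u$. Hence $u = \inf\{x \in X : u \leq x\}$ in $Y^\delta$, i.e.\ $X$ is order dense in $Y^\delta$.

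For (ii), $Y^\delta$ is now a Dedekind complete vector lattice containing $X$ order densely, exactly like $X^\delta$. The standard identification of two such completions --- made precise by \cite[Theorem~4.14]{vanHaa}, with the order-density transfer of \cite[Theorem~4.5]{vanHaa} underlying step (i) --- produces a unique order isomorphism between $X^\delta$ and $Y^\delta$ that fixes $X$ pointwise; concretely it sends an element to the supremum of the elements of $X$ lying below it, these suprema existing by Dedekind completeness and order density. I would simply quote this characterization rather than reverify it, and applying it to $Y^\delta$ yields $X^\delta = Y^\delta$.

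The main obstacle is point (ii), not (i). Order density by itself only makes $Y^\delta$ a vector lattice cover of $X$, and a pre-Riesz space has many such covers (the Riesz completion being the smallest), so one genuinely needs the extra hypothesis that $Y^\delta$ is Dedekind complete in order to single it out as the completion. This is precisely where the quoted van Haandel theorems do the real work, and I would not expect to replace them by the elementary cover machinery of Lemma~\ref{properties.23y} alone; the transitivity step (i), by contrast, is the routine greatest-lower-bound manipulation sketched above.
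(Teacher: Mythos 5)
Your proof is correct and takes essentially the same route as the paper, whose entire argument is the combination of \cite[Theorem~4.5]{vanHaa} (the order-density transfer that you prove directly in step (i), soundly, via the infimum analogue of Lemma~\ref{properties.23y}) with \cite[Theorem~4.14]{vanHaa} (the uniqueness characterization of the Dedekind completion that you quote in step (ii)). The only point you leave tacit --- that order density of $X$ in $Y^\delta$ in the infimum sense also yields $u=\sup\{x\in X \mid x\leq u\}$ for every $u\in Y^\delta$, as needed before invoking uniqueness --- follows at once by applying the infimum formulation to $-u$, so it is a routine omission rather than a gap.
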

In particular, for an Archimedean pre-Riesz space $X$ and its (Archimedean) vector lattice cover $Y$ we have $X^\delta = Y^\delta$.

Let $X$ be a pre-Riesz space and $(Y,i)$ a vector lattice cover of $X$.
The space $X$ is called \emph{pervasive in} $Y$, if for every $y\in Y_+$, $y\neq 0$, there exists $x\in X$ such that $0<i(x) \leq y$. By \c[Proposition~3.3.20]{Kalauch} the space $X$ is pervasive in $Y$ if and only if $X$ is pervasive in any vector lattice cover. So, $X$ is simply called \emph{pervasive}.

Next we discuss the restriction property and the extension property for ideals and bands.
Let $X$ be a pre-Riesz space and $(Y,i)$ a vector lattice cover of $X$. 
For $S\sub Y$ we write  $[S]i:=\left\{x\in X\mid| i(x)\in S\right\}$. 
The pair $(L,M)\subseteq \mathcal{P}(X)\times \mathcal{P}(Y)$
is said to satisfy  
\begin{itemize}
	\item[-]
	the \emph{restriction property} (R), if whenever $J\in M$,
	then $[J]i\in L$, and
	\item[-]
	the \emph{extension property} (E), if whenever $I\in L$,
	then there is $J\in M$ such that 
	$I=[J]i$.
\end{itemize}
In \c{1} the properties (R) and (E) are investigated for ideals and bands.
It is shown 
that the extension property (E) is satisfied for  bands, i.e.\ for $L$ being the set of bands in $X$ and $M$ being the set of bands in $Y$.
Moreover, the restriction property (R) is satisfied for ideals. In general, bands do not have (R) and ideals do not have (E).  
The appropriate sets $M$ and $L$ of directed ideals satisfy (E). If $X$ is pervasive, then by \c[Proposition~2.5 and Theorem~2.6]{3} we have (R) for bands.

If for an ideal $I$ in $X$ and an ideal $J$ in $Y$ we have $I=[J]i$, then $J$ is called an \emph{extension ideal} of $I$. An \emph{extension band} $J$ for a band $I$ in $X$ is defined similarly. Extension ideals and bands are not unique, in general.
If an ideal $I$ in $X$ has an extension ideal in $Y$, then
$\hat{I}:=\bigcap\left\{J\subseteq Y \mid| J \t{ is an extension ideal of } I\right\}$
is the smallest extension ideal of $I$ in $Y$.
The following result can be found in \c[Theorem~18]{03}.
\begin{theorem}\label{something}
	Let $X$ be a pre-Riesz space with a vector lattice cover $(Y,i)$ and let $I\sub X$ be an ideal. Then the following statements are equivalent.
	\begin{enumerate}
		\item\label{something.i1} $I$ is directed.
		\item\label{something.i3} There exists a set $S\sub X_+$ of positive elements such that $I=\I_S$.
		\item\label{something.i2} The set $i(I)$ is majorizing in $\I_{i(I)}$.\index{majorizing}
		\item\label{something.i4} $I$ has an extension ideal and for the smallest extension ideal $\hat{I}$ of $I$ there exists a set $S\sub X_+$ such that $\hat{I}=\I_{i(S)}$.
	\end{enumerate}
	If one of the previous statements is true, then in \ref{something.i3} and \ref{something.i4} we can choose $S:=I_+$.
\end{theorem}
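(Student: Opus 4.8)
The plan is to prove the cyclic chain \ref{something.i1} $\Rightarrow$ \ref{something.i3} $\Rightarrow$ \ref{something.i2} $\Rightarrow$ \ref{something.i4} $\Rightarrow$ \ref{something.i1}, arranging each implication so that the generating set produced is exactly $I_+$; this proves the equivalences and the concluding remark simultaneously. Two background facts will be used repeatedly: the restriction property (R) for ideals, which guarantees that $[J]i$ is an ideal in $X$ whenever $J$ is an ideal in $Y$; and the standard description, valid in the vector lattice $Y$, of the ideal generated by a set $A\sub Y_+$, namely $\I_A=\bigl\{y\in Y : |y|\leq\sum_{k=1}^n\lambda_k a_k \text{ for some } a_k\in A,\ \lambda_k\geq 0\bigr\}$.

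For \ref{something.i1} $\Rightarrow$ \ref{something.i3}, directedness gives $I=I_+-I_+$, so the subspace $\I_{I_+}$ contains $I$; since $I_+\sub I$ forces $\I_{I_+}\sub I$, we obtain $I=\I_{I_+}$, which is \ref{something.i3} with $S=I_+$. For \ref{something.i3} $\Rightarrow$ \ref{something.i2}, write $I=\I_S$ with $S\sub X_+$. I would first check $\I_{i(I)}=\I_{i(S)}$: the inclusion $\I_{i(S)}\sub\I_{i(I)}$ is clear, while $[\I_{i(S)}]i$ is an ideal of $X$ containing $S$, so $I=\I_S\sub[\I_{i(S)}]i$ and hence $i(I)\sub\I_{i(S)}$. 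Now any $y\in\I_{i(S)}$ satisfies $y\leq|y|\leq i(s)$ with $s:=\sum_k\lambda_k s_k\in I_+$, because $\I_S$ is a subspace; thus $i(I)$ is majorizing in $\I_{i(I)}$.

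The linchpin is \ref{something.i2} $\Rightarrow$ \ref{something.i4}, where I would propose $\I_{i(I)}$ as an extension ideal and verify $[\I_{i(I)}]i=I$. The inclusion $I\sub[\I_{i(I)}]i$ is immediate. Conversely, if $i(x)\in\I_{i(I)}$ then $|i(x)|\in\I_{i(I)}$, so the majorizing hypothesis yields $w\in I$ with $|i(x)|\leq i(w)$, and $w\in I_+$ since $i(w)\geq 0$. From $-w\leq x\leq w$ one checks directly that $\{w,-w\}^u\sub\{x,-x\}^u$, whence solidity of $I$ gives $x\in I$. Every extension ideal must contain $i(I)$ and therefore $\I_{i(I)}$, so $\hat I=\I_{i(I)}$; and the bound $|i(x)|\leq i(w)$ just obtained shows $i(x)\in\I_{i(I_+)}$ for each $x\in I$, so in fact $\hat I=\I_{i(I_+)}$, giving \ref{something.i4} with $S=I_+$.

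For \ref{something.i4} $\Rightarrow$ \ref{something.i1}, let $\hat I=\I_{i(S)}$ with $S\sub X_+$. Each $s\in S$ lies in $[\hat I]i=I$, so $S\sub I_+$; and for $x\in I$ we have $i(x)\in\hat I$, hence $i(x)\leq|i(x)|\leq i(s)$ for some positive combination $s\in I_+$ of elements of $S$, so $x\leq s$. Adding two such dominators shows $I$ is directed. The step I expect to be most delicate is the verification $[\I_{i(I)}]i=I$ in \ref{something.i2} $\Rightarrow$ \ref{something.i4}: this is where the majorizing hypothesis is genuinely consumed, and where one must convert the lattice bound $|i(x)|\leq i(w)$ in $Y$ back into membership in the merely solid subspace $I$ of $X$ through the order-theoretic definition of solidity, since no lattice operations are available in $X$ itself.
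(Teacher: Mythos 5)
The paper itself contains no proof of Theorem~\ref{something}: it is quoted from \cite[Theorem~18]{03} as a preliminary, so there is no in-paper argument to compare yours against; your proposal can only be judged on its own merits, and on those it is correct and complete. The cycle \ref{something.i1} $\Rightarrow$ \ref{something.i3} $\Rightarrow$ \ref{something.i2} $\Rightarrow$ \ref{something.i4} $\Rightarrow$ \ref{something.i1} closes, and the only two external ingredients you invoke are both available in the paper's preliminaries: the restriction property (R) for ideals (stated there as a known fact from \cite{1}), and the classical description of the ideal generated by a set of positive elements, which is legitimate in $Y$ because the paper notes that its order-theoretic notion of ideal coincides with the classical one in a vector lattice. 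The delicate steps are handled properly: in \ref{something.i2} $\Rightarrow$ \ref{something.i4} you convert the lattice bound $\pm i(x)\leq i(w)$ into $\pm x\leq w$ via bipositivity and then verify the containment $\left\{w,-w\right\}^u\sub\left\{x,-x\right\}^u$ demanded by the paper's definition of solidity, which is exactly what is needed since no lattice operations exist in $X$; and your identification $\hat{I}=\I_{i(I)}=\I_{i(I_+)}$ is justified by the observation that every extension ideal contains $i(I)$, hence contains $\I_{i(I)}$, which you have shown to be an extension ideal itself. Arranging the cycle so that the generating set produced in \ref{something.i1} $\Rightarrow$ \ref{something.i3} and in \ref{something.i2} $\Rightarrow$ \ref{something.i4} is exactly $I_+$ delivers the theorem's final claim ($S:=I_+$ works) with no additional argument, which is an economical way to package the statement.
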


The following result from \c[Theorem~29]{03} gives conditions under which a directed ideal is order dense in its smallest extension ideal.
\begin{theorem}\label{closedness.1001}
	Let $X$ be an Archimedean pervasive pre-Riesz space with a vector lattice cover $(Y,i)$. Let $I\sub X$ be a directed ideal and $\hat{I}\sub Y$ the smallest extension ideal of $I$. Then $(\hat{I},i|_{I})$ is a vector lattice cover of the pre-Riesz space $I$, and $I$ is pervasive.
\end{theorem}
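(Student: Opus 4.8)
The plan is to check directly that $(\hat{I},i|_{I})$ satisfies the three requirements of a vector lattice cover---$\hat{I}$ is a vector lattice, $i|_{I}$ is bipositive, and $i(I)$ is order dense in $\hat{I}$---and to establish pervasiveness along the way. First I would collect the structural facts. Since $I$ is directed and, as a subspace of the Archimedean space $X$, Archimedean, it is an Archimedean directed ordered vector space, hence a pre-Riesz space. As $I$ is directed, Theorem~\ref{something} applies: the smallest extension ideal $\hat{I}$ exists and, by \ref{something.i4} with $S=I_+$, equals $\I_{i(I_+)}=\I_{i(I)}$ (the latter since $I=I_+-I_+$). Because $Y$ is a vector lattice cover of the Archimedean pre-Riesz space $X$, it is Archimedean, and $\hat{I}$, being an ideal in the vector lattice $Y$, is a sublattice and therefore an Archimedean vector lattice. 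Finally $i|_{I}$ is bipositive as a restriction of the bipositive map $i$, and its range lies in $\hat{I}$ because $I=[\hat{I}]i$. It thus remains to prove the domination condition for pervasiveness and order density.

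For the domination condition, let $y\in\hat{I}$ with $y>0$. Since $\hat{I}=\I_{i(I_+)}$ and $I_+$ is a cone, there are $c\in I_+$ and $\mu>0$ with $y\leq\mu\,i(c)$. Pervasiveness of $X$ in $Y$ yields $x\in X$ with $0<i(x)\leq y$, whence $0\leq x\leq\mu c$ by bipositivity. As $0\le x\le \mu c$ forces $\{x,-x\}^u\supseteq\{\mu c,-\mu c\}^u$ with $\mu c\in I$, solidity of the ideal $I$ gives $x\in I$. Thus every $0<y\in\hat{I}$ dominates some $0<i(x)$ with $x\in I$; once $\hat{I}$ is known to be a cover of $I$, this is exactly pervasiveness of $I$ in $\hat{I}$ and, by cover-independence of the notion, means $I$ is pervasive.

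The main obstacle is order density, and the difficulty is that $i(X)$ is only order dense, not a sublattice of $Y$, so one cannot simply intersect a majorant with a generator and expect to land back in $i(I)$. I would therefore isolate and prove the following general fact, which sidesteps the sublattice issue: in an Archimedean vector lattice, a majorizing linear subspace $D$ such that every $0<y$ dominates some $0<d\in D$ is order dense. Applied with $D=i(I)$ this finishes the proof, since $i(I)$ is majorizing in $\I_{i(I)}=\hat{I}$ by Theorem~\ref{something}\,\ref{something.i2} and has the domination property by the previous paragraph. To prove the general fact, fix $y$ and a lower bound $w$ of $D_{\geq y}:=\left\{d\in D \mid| d\geq y\right\}$, and set $p:=(w-y)^+$; assuming $p>0$, pick $d_0\in D$ with $0<d_0\leq p$. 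Every $d\in D_{\geq y}$ then satisfies $d\geq y\vee w=y+p\geq y+d_0$. Choosing a fixed $d^\ast\in D_{\geq y}$ (possible as $D$ is majorizing) and translating it down by multiples of $d_0$, an induction shows $d^\ast-nd_0\in D_{\geq y}$ and hence $d^\ast\geq y+(n+1)d_0$ for every $n\in\NN$, so that $nd_0\leq d^\ast-y$ for all $n$. This contradicts the Archimedean property, forcing $p=0$, i.e.\ $w\leq y$; therefore $y=\inf D_{\geq y}$. With order density established, $(\hat{I},i|_{I})$ is a vector lattice cover of $I$, and the pervasiveness of $I$ follows as noted above.
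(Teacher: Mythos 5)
The paper itself contains no proof of Theorem~\ref{closedness.1001}: it is quoted verbatim from \cite[Theorem~29]{03}, so there is no in-paper argument to compare yours against, and your proposal has to be judged on its own merits. Judged so, it is correct and complete. The structural reductions are all legitimate: $I$ is directed and Archimedean, hence pre-Riesz; Theorem~\ref{something} with $S=I_+$ gives $\hat{I}=\I_{i(I_+)}=\I_{i(I)}$ together with the fact that $i(I)$ majorizes $\hat{I}$; $\hat{I}$ is an ideal, hence a sublattice, of the Archimedean vector lattice $Y$, so it is itself an Archimedean vector lattice; and $i|_I$ is bipositive with range in $\hat{I}$. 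The two substantive steps also hold up. In the domination step, the only delicate point is that solidity of $I$ is the generalized (pre-Riesz) notion, and you handle it correctly: if $0\leq x\leq \mu c$, then any upper bound $u$ of $\left\{\mu c,-\mu c\right\}$ satisfies $u\geq \mu c\geq x\geq 0\geq -x$, so $\left\{x,-x\right\}^u\supseteq\left\{\mu c,-\mu c\right\}^u$ and $x\in I$ follows. Your order-density lemma (in an Archimedean vector lattice, a majorizing subspace $D$ in which every strictly positive element dominates a strictly positive element of $D$ is order dense) is the real content of the proof, and the Archimedean induction --- $d^\ast-nd_0\in D_{\geq y}$ for all $n$, whence $nd_0\leq d^\ast-y$, contradicting $d_0>0$ --- is sound; this is precisely the kind of auxiliary result the cited source relies on as well. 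Two cosmetic remarks: your appeal to the classical description of the generated ideal (to produce $c\in I_+$ and $\mu>0$ with $y\leq\mu\, i(c)$) silently uses that the paper's ideal notion agrees with the classical one in the vector lattice $Y$, which the preliminaries do state; alternatively, this step follows from Theorem~\ref{something}\,\ref{something.i2} plus directedness of $I$, avoiding the classical formula altogether.
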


The following well-known property of ideals can be found in \c[Theorem~IV.1.2]{Vulikh_en}.
\begin{lemma}\label{properties.0}
Let $X$ be a Dedekind complete vector lattice and $I\sub X$ an ideal. Then $I$ is a Dedekind complete sublattice of $X$.
\end{lemma}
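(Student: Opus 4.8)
The plan is to verify two things: that $I$ is a sublattice of $X$, and that the supremum of a bounded-above subset of $I$, formed in $X$, already lies in $I$. Throughout I would use that, since $X$ is a vector lattice, the ideal $I$ is precisely a solid linear subspace in the classical sense, i.e.\ $w\in I$ and $|z|\leq|w|$ imply $z\in I$ (this is the coincidence of the two notions of ideal noted in the preliminaries).

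First I would check that $I$ is a sublattice. For $x\in I$, solidness applied with $w=x$ and $z=|x|$ (noting $\bigl||x|\bigr|=|x|$) gives $|x|\in I$. Hence for $x,y\in I$ we have $x-y\in I$, so $|x-y|\in I$, and therefore $x\vee y=\tfrac12\bigl(x+y+|x-y|\bigr)\in I$ and $x\wedge y=\tfrac12\bigl(x+y-|x-y|\bigr)\in I$. Thus $I$ is closed under finite lattice operations.

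Next, for Dedekind completeness, let $S\sub I$ be non-empty and bounded above in $I$, say by $u\in I$. Since $X$ is Dedekind complete, $s:=\sup_X S$ exists in $X$, and as $u$ also bounds $S$ in $X$ we get $s\leq u$. Fix any $s_0\in S\sub I$; then $s_0\leq s\leq u$, so $0\leq s-s_0\leq u-s_0$ with $u-s_0\in I$. Consequently $|s-s_0|=s-s_0\leq u-s_0=|u-s_0|$, and solidness yields $s-s_0\in I$, whence $s=(s-s_0)+s_0\in I$.

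Finally I would argue that this $s$ is in fact the supremum of $S$ in $I$: it is an upper bound of $S$ lying in $I$, and any upper bound $v\in I$ of $S$ is in particular an upper bound in $X$, so $s\leq v$. Hence $\sup_I S$ exists and equals $\sup_X S$, proving $I$ is a Dedekind complete sublattice. The only mildly delicate point is the step showing $s\in I$: the key idea is to sandwich $s$ between the two elements $s_0,u\in I$ and invoke solidness, rather than attempting to control $s$ directly.
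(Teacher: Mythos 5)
Your proof is correct, but note that there is no in-paper argument to compare it against: the paper states this lemma as a well-known fact and simply cites Vulikh [Theorem~IV.1.2], so your self-contained proof supplies what the paper delegates to the literature. Your argument is the standard one and each step checks out. The sublattice part is right: solidness with $w=x$, $z=|x|$ gives $|x|\in I$, and then the lattice identities $x\vee y=\tfrac12\bigl(x+y+|x-y|\bigr)$ and $x\wedge y=\tfrac12\bigl(x+y-|x-y|\bigr)$ keep you inside the subspace $I$. The completeness part isolates the only genuinely delicate point, namely showing that $s=\sup_X S$ lies in $I$; your device of fixing $s_0\in S$ and sandwiching $0\leq s-s_0\leq u-s_0$ with $u\in I$ an upper bound, then invoking solidness on $s-s_0$, is exactly the right move, and it legitimately uses the paper's remark that in a vector lattice its notion of ideal coincides with the classical notion of solid subspace. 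The closing observation that $s$ is then automatically $\sup_I S$ (since upper bounds of $S$ taken in $I$ are upper bounds in $X$) is also needed and correctly stated: it is what makes $I$ Dedekind complete \emph{as a lattice in its own right}, with suprema agreeing with those formed in $X$.
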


In an ordered vector space, a net $(x_\alpha)_\alpha$ is said to be \textit{decreasing} (in symbols $x_\alpha\downarrow$), whenever $\alpha\leq\beta$ implies $x_\alpha \geq x_\beta$. For $x\in X$ the notation $x_\alpha \downarrow x$ means that $x_\alpha \downarrow$ and $\inf_\alpha x_\alpha = x$. The meaning of $x_\alpha \uparrow$ and $x_\alpha \uparrow x$ are defined analogously. We say that a net \textit{order converges}, or short \textit{o-converges}, to $x\in X$ (in symbols  $x_\alpha \stackrel{o}{\longrightarrow} x$), if there is a net $(z_\alpha)_\alpha$ in $X$ such that $z_\alpha\downarrow 0$ and for every $\alpha$ one has $\pm (x-x_\alpha) \leq z_\alpha$. The equivalence of $x_\alpha \stackrel{o}{\longrightarrow} x$ and $x-x_\alpha \stackrel{o}{\longrightarrow} 0$ is obvious. If a net o-converges, then its limit is unique. A set $M\sub X$ is called \textit{order closed}, or short \textit{o-closed}, if for each net $(x_\alpha)_\alpha$ in $M$ which o-converges to $x\in X$ one has $x\in M$.

The subsequent result is established in \c[Proposition~5.1]{2} and is, in particular, true for pre-Riesz spaces, as they can be identified with order dense subspaces of their vector lattice covers.
\begin{theorem}\label{prelim.15x}
Let $X$ be an order dense subspace of an ordered vector space $Y$. 
\begin{enumerate}
\item\label{prelim.15x.it1} If a net $(x_\alpha)_\alpha$ in $X$ and $x\in X$ are such that $x_\alpha\xrightarrow{o}x$, then $x_\alpha\xrightarrow{o}x$ in $Y$.
\item\label{prelim.15x.it2} If $J\sub Y$ is o-closed in $Y$, then $J\cap X$ is o-closed in $X$.
\end{enumerate}
\end{theorem}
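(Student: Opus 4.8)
The plan is to prove part \ref{prelim.15x.it1} first and then derive part \ref{prelim.15x.it2} from it with almost no extra work. For \ref{prelim.15x.it1}, suppose $x_\alpha\xrightarrow{o}x$ in $X$, witnessed by a net $(z_\alpha)_\alpha$ in $X$ with $z_\alpha\downarrow 0$ in $X$ and $\pm(x-x_\alpha)\leq z_\alpha$ for every $\alpha$. Since $X$ carries the order induced from $Y$, so that $a\leq b$ in $X$ is equivalent to $a\leq b$ in $Y$ for $a,b\in X$, the inequalities $\pm(x-x_\alpha)\leq z_\alpha$ remain valid in $Y$, and the net $(z_\alpha)_\alpha$ is still decreasing in $Y$ with $z_\alpha\geq 0$. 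Thus the same net $(z_\alpha)_\alpha$ witnesses $x_\alpha\xrightarrow{o}x$ in $Y$ as soon as we show $\inf_Y z_\alpha=0$, that is, that $z_\alpha\downarrow 0$ holds in $Y$ as well.

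The heart of the matter is therefore the claim that if $A\subseteq X$ satisfies $\inf_X A=a$, then $\inf_Y A=a$; I would apply it to the set of values of the net $(z_\alpha)_\alpha$, with $a=0$. To prove the claim I would use the order density of $X$ in $Y$ in its dual form: applying the defining identity $y=\inf\{z\in X\colon y\leq z\}$ to $-y$ and using that $X$ is a subspace yields $y=\sup\{v\in X\colon v\leq y\}$ for every $y\in Y$, the supremum taken in $Y$. Now let $w\in Y$ be any lower bound of $A$ in $Y$. For each $v\in X$ with $v\leq w$ we have $v\leq\alpha$ for all $\alpha\in A$, so $v$ is a lower bound of $A$ lying in $X$ and hence $v\leq\inf_X A=a$. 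Consequently $a$ is an upper bound in $Y$ of the set $\{v\in X\colon v\leq w\}$, and since $w=\sup\{v\in X\colon v\leq w\}$ is its least upper bound, we obtain $w\leq a$. As $a$ is plainly a lower bound of $A$ in $Y$, this gives $\inf_Y A=a$ and completes \ref{prelim.15x.it1}.

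For \ref{prelim.15x.it2}, let $(x_\alpha)_\alpha$ be a net in $J\cap X$ that o-converges to some $x\in X$ within $X$. By part \ref{prelim.15x.it1} we then have $x_\alpha\xrightarrow{o}x$ in $Y$ as well. Since $(x_\alpha)_\alpha$ lies in $J\cap X\subseteq J$ and $J$ is o-closed in $Y$, it follows that $x\in J$; together with $x\in X$ this yields $x\in J\cap X$, so $J\cap X$ is o-closed in $X$.

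The only genuinely non-routine point is the infimum-preservation claim, specifically the passage from the statement that $0$ is the greatest lower bound of the net among the elements of $X$ to the statement that $0$ is the greatest lower bound in all of $Y$. This is exactly where order density is indispensable: representing an arbitrary $w\in Y$ as the supremum of the elements of $X$ below it reduces a comparison in $Y$ to comparisons inside $X$, where the hypothesis $\inf_X z_\alpha=0$ can be brought to bear. Everything else, namely the transfer of the inequalities and of monotonicity and the deduction of \ref{prelim.15x.it2}, is immediate from the fact that $X$ carries the induced order.
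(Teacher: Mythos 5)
Your proof is correct and complete: the dual form of order density ($y=\sup\{v\in X\mid v\leq y\}$ in $Y$), the resulting preservation in $Y$ of infima formed in $X$, and the two-line deduction of part (ii) from part (i) are all sound. Note that the paper itself gives no proof of this theorem -- it is imported verbatim from Proposition~5.1 of \cite{2} -- so there is nothing in-paper to compare against; your argument is the standard one, and your key infimum-preservation claim is precisely the infimum analogue of Lemma~\ref{properties.23y}, except that you prove it in the generality actually required here ($Y$ an arbitrary ordered vector space rather than a vector lattice cover of a pre-Riesz space), which makes your write-up self-contained.
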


For ordered vector spaces $(X,X_+)$ and $(Y,Y_+)$ we denote by $L(X,Y)$ the vector space of all linear operators from $X$ to $Y$. An operator $T\in L(X,Y)$ is called \emph{positive}, if $T(X_+)\sub Y_+$; \emph{regular}, if $T$ is a difference of two positive operators; \emph{order bounded} if $T$ maps order bounded sets in $X$ into order bounded sets in $Y$; \emph{order continuous}, if for every net $(x_\alpha)_\alpha$ in $X$ with $x_\alpha\xrightarrow{o} x \in X$ we have $T(x_\alpha)\xrightarrow{o} T(x)$. 
By $L(X,Y)_+$ we denote the set of all positive operators, by $L_r(X,Y)$ the set of all regular operators, by $L_b(X,Y)$ the set of all order bounded  operators and by $L_{oc}(X,Y)$ the set of all order continuous  operators in $L(X,Y)$.
If $X$ is directed, then $(L(X,Y), L(X,Y)_+)$ is an ordered vector space. We write $L_{oc}(X,Y)_+:=L(X,Y)_+\cap L_{oc}(X,Y)$.
The following result is well-known for vector lattices. Subsequently, we will use it without reference.
\begin{lemma}\label{operators.1005}
	Let $X$ and $Z$ be ordered vector spaces, $Z$ Archimedean, and $T\colon X\rightarrow Z$ a positive operator. Then the following statements are equivalent.
	\begin{enumerate}
		\item\label{operators.1005.it1} $T$ is order continuous.\index{operator!order continuous}\index{order continuous operator}
		\item\label{operators.1005.it2} For every net $(x_\alpha)_\alpha$ in $X$ with $x_\alpha\xrightarrow{o} 0$ it follows $T(x_\alpha)\xrightarrow{o} 0$.
		\item\label{operators.1005.it3} For every net $(x_\alpha)_\alpha$ in $X$ with $x_\alpha\downarrow 0$ it follows $T(x_\alpha)\downarrow 0$.
		\item\label{operators.1005.it4} For every net $(x_\alpha)_\alpha$ in $X$ with $x_\alpha\uparrow x$ it follows $T(x_\alpha)\uparrow T(x)$.
	\end{enumerate}
\end{lemma}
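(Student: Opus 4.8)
The plan is to prove the chain of equivalences through the scheme $(i)\Leftrightarrow(ii)$, $(ii)\Leftrightarrow(iii)$, and $(iii)\Leftrightarrow(iv)$, where positivity of $T$ is the ingredient that carries order inequalities through $T$ and monotone nets to monotone nets. Throughout I will use, without further comment, that a positive operator is monotone, so that $x_\alpha\downarrow$ implies $T(x_\alpha)\downarrow$ and $x_\alpha\uparrow$ implies $T(x_\alpha)\uparrow$, together with the identity $\inf_\alpha(x-x_\alpha)=x-\sup_\alpha x_\alpha$ whenever the supremum on the right exists.

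The equivalence $(i)\Leftrightarrow(ii)$ is purely formal. Given a net with $x_\alpha\xrightarrow{o}x$, the equivalence of $x_\alpha\xrightarrow{o}x$ and $x-x_\alpha\xrightarrow{o}0$ noted above, applied to the net $(x-x_\alpha)_\alpha$ together with the linearity identity $T(x-x_\alpha)=T(x)-T(x_\alpha)$, reduces the order continuity of $T$ to the statement in $(ii)$; taking $x=0$ gives the reverse direction. The same bookkeeping, now in its monotone form $x_\alpha\uparrow x\Leftrightarrow x-x_\alpha\downarrow 0$ (and $-x_\alpha\uparrow 0\Leftrightarrow x_\alpha\downarrow 0$), yields $(iii)\Leftrightarrow(iv)$: from $x_\alpha\uparrow x$ one passes to $x-x_\alpha\downarrow 0$, applies $(iii)$ to obtain $T(x)-T(x_\alpha)\downarrow 0$, and reads off $T(x_\alpha)\uparrow T(x)$; conversely $(iv)$ applied to $-x_\alpha\uparrow 0$ recovers $(iii)$.

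This leaves the two genuine directions $(ii)\Leftrightarrow(iii)$, which is where positivity does the real work. For $(ii)\Rightarrow(iii)$, let $x_\alpha\downarrow 0$. Since then $x_\alpha\geq 0$, the net $(x_\alpha)_\alpha$ itself serves as a witness showing $x_\alpha\xrightarrow{o}0$, so $(ii)$ gives $T(x_\alpha)\xrightarrow{o}0$; as $T(x_\alpha)$ is decreasing and positive, it remains to invoke the elementary fact that a decreasing net which o-converges to $0$ has infimum $0$ (if $w\leq T(x_\alpha)$ for all $\alpha$ and $\pm T(x_\alpha)\leq v_\alpha\downarrow 0$, then $w\leq v_\alpha$ for all $\alpha$, hence $w\leq 0$), giving $T(x_\alpha)\downarrow 0$. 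For $(iii)\Rightarrow(ii)$, let $x_\alpha\xrightarrow{o}0$ with witness $z_\alpha\downarrow 0$ and $\pm x_\alpha\leq z_\alpha$. Applying the positive operator $T$ gives $\pm T(x_\alpha)\leq T(z_\alpha)$, while $(iii)$ applied to $z_\alpha\downarrow 0$ gives $T(z_\alpha)\downarrow 0$; thus $(T(z_\alpha))_\alpha$ is a witness for $T(x_\alpha)\xrightarrow{o}0$.

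The only non-formal step is the auxiliary observation used in $(ii)\Rightarrow(iii)$, that a decreasing net which o-converges to $0$ actually has infimum $0$; everything else is linearity, monotonicity of $T$, and careful handling of the witness nets on a common index set. I expect the main care to lie in keeping the direction of the inequalities and the decreasing/increasing witnesses straight, rather than in any conceptual obstacle. It is worth noting that these arguments use only the positivity of $T$ and not the Archimedean property of $Z$, which enters the surrounding framework (for instance in the uniqueness of o-limits) rather than in the equivalences themselves.
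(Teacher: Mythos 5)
Your proof is correct, and its core --- the witness-net sandwich arguments for (ii)$\Leftrightarrow$(iii) --- is the same as in the paper. There are two real differences. Organizationally, the paper closes the cycle by proving (i)$\Rightarrow$(iv) directly, rerunning the sandwich argument on $0\leq T(x)-T(x_\alpha)\leq z_\alpha\downarrow 0$, and then (iv)$\Rightarrow$(iii); you instead obtain (iii)$\Leftrightarrow$(iv) by the purely formal monotone reduction $x_\alpha\uparrow x \Leftrightarrow x-x_\alpha\downarrow 0$ together with linearity, which avoids duplicating the sandwich --- a slightly cleaner decomposition with identical content. More substantively, your closing remark is right and is worth emphasizing: the Archimedean property of $Z$ is never needed. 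The paper's proof invokes it twice (``As $Z$ is Archimedean, it follows $\inf\{T(x_\alpha)\}=0$'', and analogously in the step (i)$\Rightarrow$(iv)), but at those points the conclusion already follows from the sandwich alone: any lower bound $w$ of $\{T(x_\alpha)\}$ satisfies $w\leq T(x_\alpha)\leq z_\alpha$ for all $\alpha$, hence $w\leq\inf_\alpha z_\alpha=0$, while $0$ is a lower bound by positivity of $T$ and of the net $(x_\alpha)$. So your argument proves the lemma for an arbitrary ordered vector space $Z$, a genuine (if small) strengthening of the statement; the Archimedean hypothesis in the paper is an artifact of the surrounding framework rather than of this proof.
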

\begin{proof}
The equivalence of \ref{operators.1005.it1} and \ref{operators.1005.it2} is clear.
		
\ref{operators.1005.it2} $\Rightarrow$ \ref{operators.1005.it3}: Let $(x_\alpha)_\alpha$ in $X$ be a net with $x_\alpha\downarrow 0$. Since $T$ is a positive operator, $x_\alpha\downarrow$ implies $T(x_\alpha)\downarrow$.
Moreover, $x_\alpha\downarrow 0$ leads, in particular, to $x_\alpha\xrightarrow{o} 0$ and so $T(x_\alpha)\xrightarrow{o} 0$. That is, $\pm T(x_\alpha)\leq z_\alpha\downarrow 0$ holds for a net $(z_\alpha)_\alpha$ in $Z$. Since $T$ is positive, we have $0\leq T(x_\alpha) \leq z_\alpha\downarrow 0$. As $Z$ is Archimedean, it follows $\inf\left\{T(x_\alpha)\mid| \alpha\in  A\right\}=0$. We obtain $T(x_\alpha)\downarrow 0$.
	
\ref{operators.1005.it3} $\Rightarrow$ \ref{operators.1005.it2}: Let a net $(x_\alpha)_\alpha$ in $X$ be such that $x_\alpha\xrightarrow{o} 0$. Then there exists a net $(y_\alpha)_\alpha$ in $X$ with $\pm x_\alpha\leq y_\alpha \downarrow 0$. Due to $T$ being positive and by virtue of \ref{operators.1005.it3} we obtain $\pm T(x_\alpha)\leq T(y_\alpha)\downarrow 0$ in $Z$. It follows $T(x_\alpha)\xrightarrow{o} 0$. 
	
	\ref{operators.1005.it1} $\Rightarrow$ \ref{operators.1005.it4}: Let a net $(x_\alpha)_\alpha$ in $X$ be such that $x_\alpha\uparrow x$. Since $T$ is a positive operator, the relation $x_\alpha\uparrow$ implies $T(x_\alpha)\uparrow$.
	Moreover, $x_\alpha\uparrow x$ implies $x_\alpha\xrightarrow{o} x$ and so $T(x_\alpha)\xrightarrow{o} T(x)$. That is, $\pm (T(x)-T(x_\alpha))\leq z_\alpha\downarrow 0$ holds for a net $(z_\alpha)_\alpha$ in $Z$. Since $T$ is positive, we have $T(x_\alpha)\leq T(x)$ and thus $0\leq T(x)-T(x_\alpha) \leq z_\alpha\downarrow 0$. As $Z$ is Archimedean, it follows $\inf\left\{T(x)-T(x_\alpha)\mid| \alpha\in A\right\}=0$. We obtain $T(x)-T(x_\alpha)\downarrow 0$. Equivalently, $-T(x_\alpha)\downarrow -T(x)$ and so $T(x_\alpha)\uparrow T(x)$.
	
	\ref{operators.1005.it4} $\Rightarrow$ \ref{operators.1005.it3}: Let a net $(x_\alpha)_\alpha$ in $X$ be such that $x_\alpha\downarrow 0$. Then it follows $-x_\alpha \uparrow 0$ and so $-T(x_\alpha)=T(-x_\alpha)\uparrow 0$. Hence $T(x_\alpha)\downarrow 0$.
\end{proof}
Notice that for the equivalence of \ref{operators.1005.it1} and \ref{operators.1005.it2} the operator $T$ need not be positive.

The following theorem can be found e.g.\ in \c[Theorem~1.59]{AliTou}.
\begin{theorem}[Riesz-Kantorovich]\label{prelim.53}
If an ordered vector space $X$ with a generating cone has the RDP, then for every Dedekind complete vector lattice $Z$ the ordered vector space $L_b(X,Z)$ is also a Dedekind complete vector lattice -- and therefore, in this case, we have $L_r(X,Z)=L_b(X,Z)$.\\
\end{theorem}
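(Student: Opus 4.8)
The plan is to reproduce the classical Riesz--Kantorovich construction, the only novelty being that $X$ is not assumed to be a vector lattice, so that the decompositions which are automatic in a lattice must now be supplied by the RDP. Fix a Dedekind complete vector lattice $Z$ and an operator $T\in L_b(X,Z)$. For $x\in X_+$ the order interval $[0,x]$ is order bounded, hence so is its image under $T$; since $Z$ is Dedekind complete, the element
\[
p(x):=\sup\left\{Ty\mid| 0\leq y\leq x\right\}
\]
exists in $Z$. The technical heart is to show that $p\colon X_+\to Z$ is additive and positively homogeneous. Positive homogeneity is immediate from $[0,\lambda x]=\lambda[0,x]$ for $\lambda\geq 0$. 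For additivity, the inequality $p(x_1+x_2)\geq p(x_1)+p(x_2)$ follows by adding admissible $y_1,y_2$ and taking suprema; the reverse inequality is exactly where the RDP enters, since any $y$ with $0\leq y\leq x_1+x_2$ splits as $y=y_1+y_2$ with $0\leq y_i\leq x_i$, whence $Ty=Ty_1+Ty_2\leq p(x_1)+p(x_2)$. I expect this decomposition step to be the main obstacle, as it is the single place where the lattice structure of $X$ has to be replaced by the RDP.

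Next, since $X_+$ is generating, every $x\in X$ can be written as $x=u-v$ with $u,v\in X_+$, and I would extend $p$ to a linear operator $T^+\in L(X,Z)$ by $T^+(x):=p(u)-p(v)$, the well-definedness being a routine consequence of the additivity of $p$ on $X_+$. By construction $T^+$ is positive and order bounded, and from $x,0\in[0,x]$ one reads off $T^+\geq T$ and $T^+\geq 0$; conversely, if a positive $R$ satisfies $R\geq T$, then $Ty\leq Rx$ for all $0\leq y\leq x$, so $p(x)\leq Rx$ and hence $T^+\leq R$. Thus $T^+=T\vee 0$ in the ordered vector space $(L_b(X,Z),L_b(X,Z)_+)$. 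Writing $S\vee T=(S-T)^++T$ and $S\wedge T=-\bigl((-S)\vee(-T)\bigr)$ then exhibits $L_b(X,Z)$ as a vector lattice.

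For Dedekind completeness, I would take a nonempty $\mathcal D\subseteq L_b(X,Z)$ bounded above by some $R$, pass to the family $\mathcal D^\vee$ of all finite suprema (which is upward directed and has the same upper bounds as $\mathcal D$), and define, for $x\in X_+$,
\[
q(x):=\sup\left\{Sx\mid| S\in\mathcal D^\vee\right\},
\]
the supremum existing in $Z$ because the set is bounded above by $Rx$. Additivity and positive homogeneity of $q$ on $X_+$ now follow using the upward directedness of $\mathcal D^\vee$ (to majorize two operators simultaneously), rather than the RDP, and $q$ extends linearly to an operator $U\in L_b(X,Z)$ exactly as before. Finally one checks that $U=\sup\mathcal D$: it is an upper bound since $q(x)\geq Sx$ for every $S\in\mathcal D$ and $x\in X_+$, and any upper bound $V$ satisfies $Vx\geq Sx$ for all $S\in\mathcal D^\vee$, hence $Vx\geq q(x)=Ux$ on the generating cone $X_+$, giving $V\geq U$. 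The concluding identity $L_r(X,Z)=L_b(X,Z)$ is then immediate: every $T\in L_b(X,Z)$ equals $T^+-T^-$ with $T^+,T^-\geq 0$ and is therefore regular, while $L_r(X,Z)\subseteq L_b(X,Z)$ holds always.
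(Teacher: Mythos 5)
Your proof is correct and follows essentially the same route as the proof the paper relies on (the theorem is stated there by citation to Theorem~1.59 of Aliprantis--Tourky rather than proved in the paper): the positive part $T^{+}x=\sup\left\{Ty : 0\leq y\leq x\right\}$ defined on the cone, with the RDP supplying exactly the decomposition needed for additivity, linear extension via the generating cone, the formulas $S\vee T=(S-T)^{+}+T$ and $S\wedge T=-\bigl((-S)\vee(-T)\bigr)$, and Dedekind completeness via pointwise suprema over an upward directed family of finite suprema. The steps you defer as routine (well-definedness of the linear extension, and order boundedness of $U$, which follows since $S_{0}\leq U\leq R$ for any fixed $S_{0}\in\mathcal{D}$) are indeed standard, so nothing essential is missing.
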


The next result by Ogasawara can be found in its classical form in \c[Theorem~1.57]{PosOp}, where it is stated for operators on vector lattices with a Dedekind complete range space. The more general version below was established recently in  \c[Theorem~7.8]{10}.
\begin{theorem}[The generalized Ogasawara Theorem]\label{prelim.1004}
Let $X$ and $Z$ be pre-Riesz spaces, where $X$ has the RDP and $Z$ is Dedekind complete. Then $L_{oc}(X,Z)$ is a band in $L_b(X,Z)$.
\end{theorem}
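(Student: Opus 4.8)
The plan is to reduce everything to the Dedekind complete vector lattice $L_b(X,Z)$ and then to adapt the classical Ogasawara argument, with the RDP of $X$ taking over the role played by the lattice operations of a Riesz space. First I note that $Z$, being a Dedekind complete pre-Riesz space, is automatically a Dedekind complete vector lattice: it is directed, so every pair has an upper bound and hence, by Dedekind completeness, a supremum. Since $X$ is a pre-Riesz space it is directed, i.e.\ $X_+$ is generating, so Theorem~\ref{prelim.53} yields that $L_b(X,Z)=L_r(X,Z)$ is a Dedekind complete (in particular Archimedean) vector lattice. Hence the notion of band coincides with that of an order closed ideal, and it suffices to show that $L_{oc}(X,Z)$, which we regard as a subspace of $L_b(X,Z)$ (order continuous operators being order bounded here), is an order closed ideal of $L_b(X,Z)$.

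The step I expect to be the main obstacle is stability under the modulus: $T\in L_{oc}(X,Z)$ should imply $T^+\in L_{oc}(X,Z)$. As $T^+\ge 0$, by Lemma~\ref{operators.1005} it suffices to verify that $x_\alpha\uparrow x$ implies $T^+x_\alpha\uparrow T^+x$, and after translating by a fixed index we may assume $0\le x_\alpha\uparrow x$. Writing $T^+x=\sup\{Ty:0\le y\le x\}$, I only need $\sup_\alpha T^+x_\alpha\ge Ty$ for each fixed $y$ with $0\le y\le x$. In a Riesz space one would use $y\wedge x_\alpha\uparrow y$, which is unavailable here; instead I would invoke the RDP. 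From $x=x_\alpha+(x-x_\alpha)$ with positive summands and $0\le y\le x$, the RDP produces $y=p_\alpha+q_\alpha$ with $0\le p_\alpha\le x_\alpha$ and $0\le q_\alpha\le x-x_\alpha$. Then $Tp_\alpha\le T^+x_\alpha\le u:=\sup_\beta T^+x_\beta$, while $\pm q_\alpha\le x-x_\alpha\downarrow 0$ gives $q_\alpha\xrightarrow{o}0$ and hence $Tq_\alpha\xrightarrow{o}0$ by order continuity of $T$; therefore $Tp_\alpha=Ty-Tq_\alpha\xrightarrow{o}Ty$. Since $Tp_\alpha\le u$ for every $\alpha$ and $Z$ is Archimedean, the order limit satisfies $Ty\le u$, which is exactly what is needed. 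Applying the same to $-T$ and adding shows that $T^-$ and $|T|=T^++T^-$ are order continuous as well.

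Next I would promote this to solidity. The elementary ingredient is that $0\le S\le R$ with $R\in L_{oc}(X,Z)$ forces $S\in L_{oc}(X,Z)$: for $x_\alpha\downarrow 0$ one has $0\le Sx_\alpha\le Rx_\alpha\downarrow 0$ with $Sx_\alpha$ decreasing, so $\inf_\alpha Sx_\alpha=0$ by the Archimedean squeeze and Lemma~\ref{operators.1005} applies. Now, given $|S|\le|T|$ with $T\in L_{oc}(X,Z)$, the previous paragraph gives $|T|\in L_{oc}(X,Z)$, the squeeze then gives $|S|\in L_{oc}(X,Z)$, and finally $S^\pm\le|S|$ together with the squeeze and $S=S^+-S^-$ gives $S\in L_{oc}(X,Z)$. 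As $L_{oc}(X,Z)$ is plainly a linear subspace, it is a solid subspace, i.e.\ an ideal, of $L_b(X,Z)$.

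Finally I would show that this ideal is order closed, and hence a band. A solid subspace of a vector lattice is order closed as soon as it contains the supremum of each of its increasing order bounded nets, so it suffices to take $0\le T_\gamma\uparrow T$ in $L_b(X,Z)$ with each $T_\gamma$ order continuous and to prove $T\in L_{oc}(X,Z)$. Here I would use that in $L_b(X,Z)$ suprema of increasing order bounded nets are computed pointwise, so that $Tx=\sup_\gamma T_\gamma x$ for $x\in X_+$. Given $x_\alpha\downarrow 0$, set $w:=\inf_\alpha Tx_\alpha\ge 0$. Fixing an index $\beta$, for $\alpha\ge\beta$ and any $\gamma$ one has $Tx_\alpha=T_\gamma x_\alpha+(T-T_\gamma)x_\alpha\le T_\gamma x_\alpha+(T-T_\gamma)x_\beta$; since $w\le Tx_\alpha$ and $\inf_\alpha T_\gamma x_\alpha=0$ (order continuity of $T_\gamma$), taking the infimum over $\alpha$ yields $w\le(T-T_\gamma)x_\beta$. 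As this holds for every $\gamma$ and $\inf_\gamma(T-T_\gamma)x_\beta=Tx_\beta-\sup_\gamma T_\gamma x_\beta=0$, we conclude $w=0$, i.e.\ $Tx_\alpha\downarrow 0$, so $T$ is order continuous. This makes $L_{oc}(X,Z)$ an order closed ideal and therefore a band in $L_b(X,Z)$. Throughout, the only genuinely new point beyond the Riesz space proof is the RDP decomposition replacing the meet $y\wedge x_\alpha$; every passage to a limit is justified purely order-theoretically by the Archimedean property of $Z$ and the decreasing majorant defining order convergence.
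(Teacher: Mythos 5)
Your proof is correct, but note that the paper itself does not prove this theorem: it quotes it as Theorem~7.8 of Hauser and Kalauch \cite{10}, where it is obtained within a general topological framework for order convergence. Your argument is instead a direct, self-contained adaptation of the classical Ogasawara proof for vector lattices (cf.\ \cite[Theorem~1.57]{PosOp}) to the pre-Riesz setting, and all three stages go through: the reduction of $Z$ to a Dedekind complete vector lattice and of the band property to ``order closed ideal'' in the Dedekind complete (hence Archimedean) vector lattice $L_b(X,Z)$; the RDP decomposition $y=p_\alpha+q_\alpha$ with $0\le p_\alpha\le x_\alpha$ and $0\le q_\alpha\le x-x_\alpha$, which correctly replaces the unavailable meet $y\wedge x_\alpha$ and yields $T^+\in L_{oc}(X,Z)$, hence solidity via the squeeze; and order closedness via the estimate $w\le (T-T_\gamma)x_\beta$ together with $\inf_\gamma(T-T_\gamma)x_\beta=0$. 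The one thing you should make explicit is that you use two facts going beyond the bare statement of Theorem~\ref{prelim.53} as quoted in the paper: the Riesz--Kantorovich formula $T^+x=\sup\left\{Ty \mid 0\le y\le x\right\}$ for $x\in X_+$, and the fact that suprema of increasing, order bounded nets in $L_b(X,Z)$ are computed pointwise on $X_+$. Both are part of the classical Riesz--Kantorovich theorem and its proof (see \cite[Theorem~1.59]{AliTou}), valid exactly under the hypotheses ``generating cone plus RDP'' that you have, so this is a matter of citation rather than a gap. Compared with the route through \cite{10}, your proof buys elementarity and self-containedness, staying entirely within classical operator-theoretic net arguments, whereas the cited proof situates the result in a broader theory of order continuity; for the purposes of this paper either justification suffices.
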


\section{Order continuous operators: Extension and restriction of the range space}

A classical result \c[Lemma~1.54]{PosOp} states that every order continuous operator between vector lattices is order bounded.
The proof of this statement can be adopted for ordered vector spaces. 
\begin{proposition}\label{operators.3}
	Let $X$ and $Z$ be ordered vector spaces and let $X$ be directed. Then every order continuous operator $T\colon X\rightarrow Z$ is order bounded. That is, $L_{oc}(X,Z)\sub L_b(X,Z)$.
\end{proposition}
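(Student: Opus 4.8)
The plan is to mirror the classical vector-lattice argument of \cite[Lemma~1.54]{PosOp}, reducing order boundedness of $T$ to a statement about a single order interval and then forcing a contradiction with order continuity. First I would reduce the claim: since order bounded subsets of $X$ lie in order intervals $[a,b]$, and by linearity $T([a,b]) = T(a) + T([0,b-a])$, it suffices to show that $T([0,u])$ is order bounded in $Z$ for every $u\in X_+$ (directedness of $X$ guarantees this reduction is the right one). Using the order reflection $y\mapsto u-y$ of $[0,u]$ onto itself together with $T(u-y)=T(u)-T(y)$, one checks that $T([0,u])$ is bounded above if and only if it is bounded below: an upper bound $b$ of $T([0,u])$ yields $T(y)=T(u)-T(u-y)\geq T(u)-b$ for all $y\in[0,u]$. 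Hence it suffices to prove that $T([0,u])$ is bounded above.

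Next I would argue by contradiction: assume $T([0,u])$ has no upper bound in $Z$. The goal is to produce a sequence $(x_n)$ with $0\leq x_n\leq u$ such that the rescaled sequence $\bigl(\tfrac1n T(x_n)\bigr)_n$ has no upper bound in $Z$, while at the same time $\tfrac1n x_n\xrightarrow{o}0$ in $X$. The latter would follow from $0\leq \tfrac1n x_n\leq \tfrac1n u$ together with $\tfrac1n u\downarrow 0$, and order continuity would then give $\tfrac1n T(x_n)=T(\tfrac1n x_n)\xrightarrow{o}0$. Since an order convergent sequence is order bounded (if $\pm(v-v_n)\leq z_n\downarrow0$ then $v-z_1\leq v_n\leq v+z_1$ for all $n$), the sequence $\bigl(\tfrac1n T(x_n)\bigr)_n$ would be order bounded, contradicting its unboundedness and completing the proof.

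The main obstacle is the selection of $(x_n)$ above, i.e.\ upgrading the statement ``$T([0,u])$ has no upper bound'' to a sequence whose images grow faster than linearly after rescaling; mere unboundedness of $\{T(x_n)\}$ is \emph{not} enough, since order continuity only forces $T(x_n)\leq n z_1$, a bound that degrades with $n$. In the vector-lattice setting this is handled by a recursive \emph{escaping} construction, choosing $0\leq x_n\leq u$ with $|T(x_n)|\not\leq n\sum_{k<n}|T(x_k)|$, which is possible precisely because $\{|T(y)| : 0\leq y\leq u\}$ is not bounded above, and the lattice operations then convert this into the required unboundedness. Adapting this to a general ordered vector space $Z$ is the delicate point, since neither $X$ nor $Z$ need be a lattice, so absolute values are unavailable; the remedy I would pursue is to run the recursion using only upper bounds, replacing each $|T(x_k)|$ by positive elements dominating the finitely many values produced so far and invoking the ``no upper bound'' hypothesis directly at each step. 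A secondary technical point is the relation $\tfrac1n u\downarrow 0$, where an Archimedean-type property of $X$ enters and must be accounted for.
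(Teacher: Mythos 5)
Your proposal is not the paper's argument, and as it stands it contains two genuine gaps; one of them is fatal at the stated level of generality. The decisive problem is the step $\frac{1}{n}u\downarrow 0$: this is an Archimedean property of $X$, and the proposition assumes only that $X$ and $Z$ are ordered vector spaces with $X$ directed. In $\RR^2$ with the lexicographic order and $u=(1,0)$ one has $\frac{1}{n}u=(\frac1n,0)\geq(0,1)>0$ for all $n$, so $\frac{1}{n}u\not\downarrow 0$; your rescaling device then produces no order-null net at all, and there is no way to ``account for'' this, because scalar rescaling is exactly the mechanism that fails in non-Archimedean spaces. The second gap is the one you yourself flag as the main obstacle: the escaping construction is never carried out, and the lattice-style recursion you sketch does not deliver what is needed even when $Z$ is a lattice. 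If you pick $0\leq x_n\leq u$ with $|T(x_n)|\not\leq n\sum_{k<n}|T(x_k)|$ and then suppose $\frac1n|T(x_n)|\leq w$ for all $n$, no contradiction follows, since the hypothetical bound $w$ need not be comparable with $\sum_{k<n}|T(x_k)|$; escaping the latter says nothing about escaping multiples of $w$. The ``no upper bound'' hypothesis only lets you escape each \emph{fixed} candidate bound once, which is precisely the circularity you describe, and your proposed remedy (dominating the finitely many values produced so far) runs into the same problem, now without lattice operations to help.

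For contrast, the classical proof of Lemma~1.54 in \cite{PosOp}, which the paper adapts, is not a contradiction/rescaling argument and needs neither lattice structure nor the Archimedean property. Take $x\in X_+$ and view $A:=[0,x]$ itself as a directed index set under the \emph{reversed} order; the net $x_\alpha:=\alpha$ then satisfies $x_\alpha\downarrow 0$. Order continuity yields a net $(y_\alpha)_{\alpha\in A}$ in $Z$ with $\pm T(x_\alpha)\leq y_\alpha\downarrow 0$, and since $x$ is the least index of $A$, the decreasing net satisfies $y_\alpha\leq y_x$ for every $\alpha$; hence $T([0,x])\sub[-y_x,y_x]$. A general interval is handled by the translation $[a,b]=a+[0,b-a]$ and linearity. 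The whole point of indexing the net by the interval itself is that the dominating net evaluated at the least index gives a single uniform bound, which is exactly the step your rescaling approach tries, and fails, to manufacture.
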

\begin{proof}
	Let $T\colon X\rightarrow Z$ be an order continuous operator. First we consider  for $x\in X_+$ the order interval $A:=[0,x]$.
	Let $A$ be endowed with the reversed order and let $(x_\alpha)_{\alpha\in A}$ be the net with $x_\alpha:=\alpha$ for every $\alpha\in A$. 
	Then $x_\alpha\downarrow 0$.  Due to the order continuity of $T$ there exists a net $(y_\alpha)_{\alpha\in A}$ of $Z$ such that $\pm T(x_\alpha)\leq y_\alpha\downarrow 0$. Consequently, for every $\alpha\in[0,x]$ we have $\pm T(\alpha) = \pm T(x_\alpha) \leq y_\alpha \leq y_x$, i.e.\ $T$ maps $[0,x]$ into the order interval $[-y_x,y_x]\sub Z$. For an arbitrary order interval $[a,b]\sub X$ we have $[a,b]=[0,b-a]+a$. As $T$ is linear, it maps $[a,b]$ 
	into an order bounded subset of $Z$.
\end{proof}

Let $X$ and $Z$ be ordered vector spaces, where $X$ is directed. We define \emph{the directed part} of $L_{oc}(X,Z)$ by
\begin{equation}
L_{oc}^\diamond(X,Z):=L_{oc}(X,Z)_+ - L_{oc}(X,Z)_+.
\end{equation}

First, we collect some properties of $L^\diamond_{oc}(X,Z)$. Clearly, we have the inclusion $L_{oc}^\diamond(X,Z) \sub L_{oc}(X,Z)\cap L_r(X,Z)$. 
In the classical situation, i.e.\ if $X$ is a directed ordered vector space with RDP and $Z$ a Dedekind complete vector lattice, by the Riesz-Kantorovich Theorem~\ref{prelim.53} the space $L_r(X,Z)=L_b(X,Z)$ is a Dedekind complete vector lattice. Moreover, due to Theorem~\ref{prelim.1004}, the space $L_{oc}(X,Z)$ is a band in $L_r(X,Z)$ and thus directed, which yields the next result.

\begin{proposition}\label{atomic.0+1}
Let $X$ be a directed ordered vector space with RDP and let $Z$ be a Dedekind complete vector lattice. Then we have
\[L_{oc}^\diamond(X,Z)=L_{oc}(X,Z)\cap L_r(X,Z)=L_{oc}(X,Z).\]
\end{proposition}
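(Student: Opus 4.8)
The plan is to establish the chain of inclusions
\[
L_{oc}^\diamond(X,Z)\sub L_{oc}(X,Z)\cap L_r(X,Z)\sub L_{oc}(X,Z)\sub L_{oc}^\diamond(X,Z),
\]
which forces all three spaces to coincide and yields both asserted equalities at once. The leftmost inclusion is the one already recorded in the text preceding the statement; it is immediate, since every positive operator is regular, so $L_{oc}(X,Z)_+\sub L_{oc}(X,Z)\cap L_r(X,Z)$, and the right-hand side is a linear subspace, hence contains all differences in $L_{oc}(X,Z)_+-L_{oc}(X,Z)_+$. The middle inclusion is trivial. Thus the content of the proposition is concentrated in the first and the last inclusions.

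First I would prove the second stated equality $L_{oc}(X,Z)\cap L_r(X,Z)=L_{oc}(X,Z)$, which reduces to $L_{oc}(X,Z)\sub L_r(X,Z)$. By Proposition~\ref{operators.3} every order continuous operator on the directed space $X$ is order bounded, so $L_{oc}(X,Z)\sub L_b(X,Z)$. Since $X$ is directed with the RDP and $Z$ is Dedekind complete, the Riesz--Kantorovich Theorem~\ref{prelim.53} applies and gives $L_b(X,Z)=L_r(X,Z)$. Combining the two yields $L_{oc}(X,Z)\sub L_r(X,Z)$, which is exactly the middle-to-right inclusion.

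For the last inclusion $L_{oc}(X,Z)\sub L_{oc}^\diamond(X,Z)$ I would appeal to the generalized Ogasawara Theorem~\ref{prelim.1004}: under the present hypotheses $L_{oc}(X,Z)$ is a band in $L_b(X,Z)$. As $L_b(X,Z)=L_r(X,Z)$ is a Dedekind complete vector lattice, a band in it is a (classical) order closed ideal and, in particular, a sublattice; hence it is closed under the maps $T\mapsto T^+=T\vee 0$ and $T\mapsto T^-=(-T)\vee 0$. Consequently, for every $T\in L_{oc}(X,Z)$ the operators $T^+,T^-$ again belong to $L_{oc}(X,Z)$ and are positive, so $T^+,T^-\in L_{oc}(X,Z)_+$ and $T=T^+-T^-\in L_{oc}^\diamond(X,Z)$.

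The whole argument is short and essentially a matter of assembling the cited results in the right order; there is no genuinely hard estimate. The single point deserving attention is the passage from ``band'' to ``directed'': one must know that forming $T^\pm$ inside the vector lattice $L_b(X,Z)$ does not escape $L_{oc}(X,Z)$. This is precisely what being a band (equivalently, a solid, order closed subspace) in $L_b(X,Z)$ guarantees, so no separate verification of the order continuity of $T^\pm$ is needed.
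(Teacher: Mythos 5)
Your proof is correct and follows essentially the same route as the paper: the Riesz--Kantorovich Theorem~\ref{prelim.53} gives $L_b(X,Z)=L_r(X,Z)$ as a Dedekind complete vector lattice, and the generalized Ogasawara Theorem~\ref{prelim.1004} makes $L_{oc}(X,Z)$ a band therein, hence an ideal closed under $T\mapsto T^{\pm}$ and therefore directed, which is exactly the paper's argument. Your write-up merely makes explicit two points the paper leaves implicit, namely $L_{oc}(X,Z)\sub L_b(X,Z)$ via Proposition~\ref{operators.3} and the passage from ``band'' to ``directed''.
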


In Remark~\ref{some_remark}(b) below we will see that $L_{oc}^\diamond(X,Z)$ and $L_{oc}(X,Z)$ do not coincide, in general.

\begin{proposition}\label{atomic.0+2}
	Let $X$ and $Z$ be ordered vector spaces with $X$ directed and $Z$ Archimedean. Then $L_{oc}^\diamond(X,Z)$ is an Archimedean pre-Riesz space.
\end{proposition}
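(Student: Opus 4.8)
The plan is to reduce the statement to van Haandel's characterization of pre-Riesz spaces, namely \c[Theorem~17.1]{vanHaa} (quoted in the preliminaries): every Archimedean directed ordered vector space is pre-Riesz. So it suffices to equip $L_{oc}^\diamond(X,Z)$ with the order induced from $L(X,Z)$ and check three things: that it is an ordered vector space, that it is directed, and that it is Archimedean.

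First I would settle the algebraic and order structure. Since order continuity is preserved under addition and scalar multiplication, $L_{oc}(X,Z)$ is a linear subspace of $L(X,Z)$, and $C:=L_{oc}(X,Z)_+$ is a wedge in it: sums and nonnegative multiples of positive order continuous operators are again positive and order continuous. By definition $L_{oc}^\diamond(X,Z)=C-C$ is the linear span of $C$, hence a linear subspace of $L(X,Z)$. Because $X$ is directed, $L(X,Z)$ is an ordered vector space, so its cone $L(X,Z)_+$ is proper; consequently the induced cone $L_{oc}^\diamond(X,Z)\cap L(X,Z)_+$ is proper as well. One checks that this induced cone coincides with $C$, using the inclusion $L_{oc}^\diamond(X,Z)\sub L_{oc}(X,Z)$ already recorded above. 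Thus $L_{oc}^\diamond(X,Z)$ is an ordered vector space whose positive cone is exactly $C$; and since $L_{oc}^\diamond(X,Z)=C-C$, this cone is generating, so the space is directed.

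Next I would establish the Archimedean property, which is the only step where the hypothesis on $Z$ enters essentially. Suppose $S,T\in L_{oc}^\diamond(X,Z)$ satisfy $nS\leq T$ for every $n\in\NN$. Unravelling the operator order, this means $n\,S(x)\leq T(x)$ in $Z$ for every $x\in X_+$ and every $n\in\NN$. Fixing $x\in X_+$ and using that $Z$ is Archimedean gives $S(x)\leq 0$. Hence $S(x)\leq 0$ for all $x\in X_+$, so $-S$ is positive; since $S\in L_{oc}^\diamond(X,Z)$, this means $-S\in C$, i.e.\ $S\leq 0$ in $L_{oc}^\diamond(X,Z)$. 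This is precisely the Archimedean condition, and the argument transfers the Archimedean property of $Z$ to the operator space purely pointwise.

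Having shown that $L_{oc}^\diamond(X,Z)$ is a directed Archimedean ordered vector space, the conclusion follows at once from \c[Theorem~17.1]{vanHaa}. The main delicate point is the Archimedean step; the remaining verifications — that the defining cone is proper, generating, and equal to $L_{oc}(X,Z)_+$ — are routine. It is worth noting that here $Z$ need only be Archimedean rather than Dedekind complete, which is exactly what places us in the genuinely pre-Riesz situation, in contrast to the vector-lattice case of Proposition~\ref{atomic.0+1}.
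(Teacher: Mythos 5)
Your proof is correct and follows essentially the same route as the paper's: verify that $L_{oc}^\diamond(X,Z)$ is directed (its cone is generating by definition) and Archimedean (inherited pointwise from $Z$ via $L(X,Z)$), then invoke van Haandel's theorem that every Archimedean directed ordered vector space is pre-Riesz. The paper compresses the Archimedean step into the observation that $L(X,Z)$ is Archimedean and subspaces inherit this property, whereas you carry out the pointwise argument explicitly inside the subspace; this is only a difference in level of detail, not in method.
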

\begin{proof}
It is clear that $L_{oc}^\diamond(X,Z)$ is an ordered vector space with a generating cone, i.e.\ the space is directed. Moreover, if $Z$ is Archimedean, then $L(X,Z)$ is Archimedean. Thus $L_{oc}^\diamond(X,Z)\sub L(X,Z)$ is likewise Archimedean. Consequently, $L_{oc}^\diamond(X,Z)$ is pre-Riesz.
\end{proof}

\begin{remark}
As it is open whether $L_{oc}(X,Z)\cap L_r(X,Z)$ is directed, hereafter we only consider assumptions as in Proposition~\ref{atomic.0+2} and deal with the pre-Riesz space $L_{oc}^\diamond(X,Z)$. 
\end{remark}

Next we investigate order continuous operators in view of the extension of the range.
\begin{theorem}\label{operators.6}
Let $X$ be a directed ordered vector space and $Z$ a pre-Riesz space with a vector lattice cover $(Y,i)$. If $T\in L_{oc}(X,Z)$, then $i\circ T\in L_{oc}(X,Y)$.
\end{theorem}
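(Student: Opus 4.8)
The plan is to unwind the definition of order continuity for $i \circ T$ and then invoke Theorem~\ref{prelim.15x}\ref{prelim.15x.it1} to transport order convergence from $Z$ up into its vector lattice cover $Y$. So I would start with an arbitrary net $(x_\alpha)_\alpha$ in $X$ together with $x \in X$ satisfying $x_\alpha \xrightarrow{o} x$, and aim to prove $(i\circ T)(x_\alpha) \xrightarrow{o} (i\circ T)(x)$ in $Y$; this is precisely the condition for $i\circ T \in L_{oc}(X,Y)$.

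First I would apply the hypothesis $T \in L_{oc}(X,Z)$ to this net, which by definition of order continuity gives $T(x_\alpha) \xrightarrow{o} T(x)$ in $Z$. At this stage everything still lives inside the pre-Riesz space $Z$, and the remaining task is to push this convergence through the embedding $i$. Identifying $Z$ with the order dense subspace $i(Z)$ of $Y$ --- legitimate because $i$ is bipositive, so the order induced on $i(Z)$ from $Y$ coincides with the order transported from $Z$ --- the convergence $T(x_\alpha) \xrightarrow{o} T(x)$ in $Z$ reads as $i(T(x_\alpha)) \xrightarrow{o} i(T(x))$ in the subspace $i(Z)$.

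The key step is then the direct application of Theorem~\ref{prelim.15x}\ref{prelim.15x.it1}: since $i(Z)$ is order dense in $Y$, order convergence of a net within $i(Z)$ persists in $Y$, so $i(T(x_\alpha)) \xrightarrow{o} i(T(x))$ holds in $Y$ as well. Rewriting this as $(i\circ T)(x_\alpha) \xrightarrow{o} (i\circ T)(x)$ finishes the proof. I do not expect a real obstacle: the only point requiring care is the bookkeeping of the identification $Z \cong i(Z)$, i.e.\ verifying that bipositivity of $i$ makes order convergence in $Z$ correspond to order convergence in $i(Z)$, so that Theorem~\ref{prelim.15x} genuinely applies. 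Notably, neither directedness of $X$ nor any lattice operations in $Y$ beyond the order dense embedding enter the argument, which explains why the dual restriction statement (treated separately) is the one that needs positivity.
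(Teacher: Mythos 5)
Your proof is correct, and it reaches the conclusion by a slightly different route than the paper. The paper argues by hand: it reduces to null nets (using that the equivalence of order continuity with the condition $x_\alpha\xrightarrow{o}0 \Rightarrow T(x_\alpha)\xrightarrow{o}0$ requires no positivity of $T$), unfolds $T(x_\alpha)\xrightarrow{o}0$ into a dominating net $\pm T(x_\alpha)\leq z_\alpha\downarrow 0$ in $Z$, and then applies Lemma~\ref{properties.23y}\ref{properties.23y.eq1} to get $i(z_\alpha)\downarrow 0$ in $Y$, whence $\pm i(T(x_\alpha))\leq i(z_\alpha)\downarrow 0$. You instead invoke Theorem~\ref{prelim.15x}\ref{prelim.15x.it1} as a black box; that theorem is precisely the statement that o-convergence of a net in an order dense subspace persists in the ambient space, and the paper's inline argument is essentially its proof specialized to the situation at hand. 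Both routes are legitimate, since Theorem~\ref{prelim.15x} is stated in the preliminaries: yours is shorter and handles an arbitrary limit $x$ directly, without the reduction to null nets, while the paper's version makes explicit the single ingredient actually used, namely that the embedding $i$ preserves the infimum of the dominating net. The one point in your argument needing care --- that bipositivity makes $i\colon Z\to i(Z)$ an order isomorphism, so o-convergence in $Z$ is the same as o-convergence in $i(Z)$ with the order induced from $Y$ --- you identify and justify correctly. Your closing observation that directedness of $X$ plays no role in this implication is also accurate; the paper assumes it only so that $L_{oc}(X,Z)$ and $L_{oc}(X,Y)$ are ordered vector spaces.
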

\begin{proof}
As $X$ is directed, $L_{oc}(X,Z)$ and $L_{oc}(X,Y)$ are ordered vector spaces.

Let $T\in L_{oc}(X,Z)$ and let  $(x_\alpha)_\alpha$ be a net in $X$ with $x_\alpha\xrightarrow{o} 0$. Then $T(x_\alpha)\xrightarrow{0} 0$ in $Z$, i.e.\ there exists a net $(z_\alpha)_\alpha$ in $Z$ such that $\pm T(x_\alpha) \leq z_\alpha \downarrow 0$.
Considering the vector lattice cover $(Y,i)$ of $Z$, by Lemma~\ref{properties.23y}\ref{properties.23y.eq1} we obtain  $\inf\left\{i(z_\alpha)\mid|\alpha\in A\right\}=i(\inf\left\{z_\alpha\mid|\alpha\in A\right\}) = i(0) = 0$. This implies
$\pm i(T(x_\alpha))\leq i(z_\alpha) \downarrow 0$,
i.e.\ the operator $i\circ T$ is order continuous.
\end{proof}

One can formulate the result in Theorem~\ref{operators.6} in the setting of order dense subspaces.
\begin{corollary}\label{fizzehn}
Let $X$ be a directed ordered vector space, $Y$ a vector lattice and $Z$ an order dense subspace of $Y$. Then $L_{oc}(X,Z)\subseteq  L_{oc}(X,Y)$.
\end{corollary}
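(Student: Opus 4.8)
The plan is to recognize Corollary~\ref{fizzehn} as a direct specialization of Theorem~\ref{operators.6}. The setting here is the special case in which the pre-Riesz space $Z$ is literally an order dense subspace of the vector lattice $Y$, and the bipositive embedding $i\colon Z\to Y$ is simply the inclusion map. First I would note that under the hypotheses — $Y$ a vector lattice and $Z$ an order dense subspace — the pair $(Y,\mathrm{id})$ is by definition a vector lattice cover of the pre-Riesz space $Z$. Thus every assumption of Theorem~\ref{operators.6} is met, with $i$ the inclusion.

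Next I would take an arbitrary $T\in L_{oc}(X,Z)$ and apply Theorem~\ref{operators.6} to conclude that $i\circ T = T\in L_{oc}(X,Y)$; since $i$ is the inclusion, $i\circ T$ is nothing but $T$ viewed as an operator into $Y$. The only subtlety worth spelling out is the distinction between order convergence computed in $Z$ versus in $Y$: the statement $T\in L_{oc}(X,Z)$ refers to $o$-convergence of images in $Z$, whereas $T\in L_{oc}(X,Y)$ refers to $o$-convergence in $Y$. This is precisely the content that Theorem~\ref{operators.6} handles via Lemma~\ref{properties.23y}\ref{properties.23y.eq1}, which guarantees that an infimum of a net that is $0$ in $Z$ remains an infimum equal to $0$ in $Y$. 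Hence the dominating net $(z_\alpha)_\alpha$ witnessing $T(x_\alpha)\xrightarrow{o}0$ in $Z$ continues to witness the same convergence in $Y$.

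I do not expect any genuine obstacle here, since the work has already been done in Theorem~\ref{operators.6}; the corollary is a reformulation that replaces the abstract vector lattice cover $(Y,i)$ by the concrete inclusion of an order dense subspace. The proof can therefore be stated in a single sentence: apply Theorem~\ref{operators.6} with $i$ the inclusion of $Z$ into $Y$ to obtain that each $T\in L_{oc}(X,Z)$ lies in $L_{oc}(X,Y)$, which gives the claimed inclusion $L_{oc}(X,Z)\subseteq L_{oc}(X,Y)$.
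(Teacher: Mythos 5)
Your proposal is correct and coincides with the paper's own treatment: the paper gives no separate proof of Corollary~\ref{fizzehn} precisely because it is, as you say, Theorem~\ref{operators.6} specialized to the case where $Z$ is an order dense subspace of $Y$ and $i$ is the (bipositive) inclusion, so that $(Y,i)$ is a vector lattice cover of the pre-Riesz space $Z$. Your remark about order convergence in $Z$ versus $Y$ being handled by Lemma~\ref{properties.23y}\ref{properties.23y.eq1} inside Theorem~\ref{operators.6} is exactly the right point to flag.
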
	

In Example~\ref{operators.7a} below we will need the following statement, where we consider an Archimedean pre-Riesz space $Y$ as an order dense subspace of its Dedekind completion $Y^{\delta}$ and apply Corollary~\ref{fizzehn}. 
\begin{corollary}\label{operators.xxx}
Let $X$ be a directed ordered vector space and $Y$ an Archimedean pre-Riesz space. Then $L_{oc}(X,Y)\subseteq L_{oc}(X,Y^\delta)$.
	\end{corollary}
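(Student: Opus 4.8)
The plan is to obtain this as a direct specialization of Corollary~\ref{fizzehn}. The first thing I would record is that the Dedekind completion $Y^\delta$ is available and carries the right structure. Since $Y$ is a pre-Riesz space it is directed (recall that every pre-Riesz space is directed), and it is Archimedean by assumption; hence $Y$ is an Archimedean directed ordered vector space, and by the remarks on Dedekind completions in the preliminaries its Dedekind completion $Y^\delta$ exists, is a vector lattice (in fact Dedekind complete), and is a vector lattice cover of $Y$. In particular $Y$ embeds order densely into $Y^\delta$, so under the usual identification we may regard $Y$ as an order dense subspace of the vector lattice $Y^\delta$.

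With this in hand I would simply invoke Corollary~\ref{fizzehn} with the directed ordered vector space taken to be $X$, the vector lattice taken to be $Y^\delta$, and the order dense subspace taken to be $Y$. The conclusion of that corollary then reads $L_{oc}(X,Y)\subseteq L_{oc}(X,Y^\delta)$, which is precisely the assertion to be proved.

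I do not expect any genuine obstacle here: all the substantive work is already contained in Theorem~\ref{operators.6} and its reformulation Corollary~\ref{fizzehn}, where preservation of order continuity under an order dense embedding of the range space is established by transporting the dominating net through the embedding via Lemma~\ref{properties.23y}\ref{properties.23y.eq1}. The only points needing a word of justification are that $Y$ is genuinely order dense in $Y^\delta$ and that $Y^\delta$ is a vector lattice, and both are guaranteed by the basic properties of the Dedekind completion recalled above. Thus proving the corollary amounts to no more than matching up the hypotheses of Corollary~\ref{fizzehn} with the canonical embedding $Y\hookrightarrow Y^\delta$.
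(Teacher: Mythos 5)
Your proof is correct and follows exactly the paper's intended argument: the paper itself states that this corollary is obtained by viewing the Archimedean pre-Riesz space $Y$ as an order dense subspace of its Dedekind completion $Y^\delta$ and applying Corollary~\ref{fizzehn}. Your additional verification that $Y$ is directed (hence $Y^\delta$ exists and is a vector lattice cover) is precisely the hypothesis-matching the paper leaves implicit.
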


Next we establish that the converse of Theorem~\ref{operators.6} is satisfied, provided the considered operator is positive.
\begin{theorem}\label{operators.7}
Let $X$ and $Z$ be pre-Riesz spaces and $(Y,i)$ a vector lattice cover of $Z$. For a positive operator $T\colon X\rightarrow Z$ let $i\circ T\in L_{oc}(X,Y)$. Then $T\in L_{oc}(X,Z)$. 
\end{theorem}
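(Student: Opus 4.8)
The plan is to verify order continuity of $T$ through the decreasing-net characterisation and then to transport the relevant infimum from $Y$ back to $Z$ via order denseness. By the equivalence of \ref{operators.1005.it1} and \ref{operators.1005.it2} in Lemma~\ref{operators.1005} (which, as remarked after its proof, needs neither positivity nor the Archimedean property), it suffices to show that $x_\alpha\xrightarrow{o}0$ in $X$ implies $T(x_\alpha)\xrightarrow{o}0$ in $Z$. So I would fix a net $(x_\alpha)_\alpha$ in $X$ with $x_\alpha\xrightarrow{o}0$ and choose a net $(z_\alpha)_\alpha$ in $X$ with $z_\alpha\downarrow 0$ and $\pm x_\alpha\leq z_\alpha$ for every $\alpha$, i.e.\ $-z_\alpha\leq x_\alpha\leq z_\alpha$.

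Since $T$ is positive, applying $T$ gives $\pm T(x_\alpha)\leq T(z_\alpha)$, and $(T(z_\alpha))_\alpha$ is a decreasing net of positive elements of $Z$. Hence the whole problem reduces to proving that $T(z_\alpha)\downarrow 0$ in $Z$: once this is in hand, $\pm T(x_\alpha)\leq T(z_\alpha)\downarrow 0$ immediately yields $T(x_\alpha)\xrightarrow{o}0$, as required.

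To obtain $T(z_\alpha)\downarrow 0$ in $Z$, I would pass to the cover. From $z_\alpha\downarrow 0$ we get $z_\alpha\xrightarrow{o}0$, so the order continuity of $i\circ T$ yields $i(T(z_\alpha))\xrightarrow{o}0$ in $Y$. As $(i(T(z_\alpha)))_\alpha$ is decreasing with $i(T(z_\alpha))\geq 0$, its order limit $0$ is in fact its infimum in $Y$ (any lower bound $u$ satisfies $u\leq i(T(z_\alpha))\leq v_\alpha$ for the net $(v_\alpha)_\alpha\downarrow 0$ witnessing the order convergence, hence $u\leq 0$). Thus $\inf_\alpha i(T(z_\alpha))=0=i(0)\in i(Z)$, and the infimum analog of Lemma~\ref{properties.23y}\ref{properties.23y.eq2}, applied to $S=\{T(z_\alpha):\alpha\}\subseteq Z$, shows that $\inf_\alpha T(z_\alpha)$ exists in $Z$ and equals $0$; that is, $T(z_\alpha)\downarrow 0$, completing the argument.

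The one genuinely load-bearing step is the transport of the infimum from $Y$ to $Z$. It works here only because the infimum of the images equals $0\in i(Z)$, which is exactly the hypothesis of Lemma~\ref{properties.23y}\ref{properties.23y.eq2}; positivity of $T$ is what forces the images $i(T(z_\alpha))$ to form a \emph{decreasing} net, so that their order limit coincides with their infimum and hence lies in $i(Z)$. For a general operator, the net dominating $\pm i(T(x_\alpha))$ in $Y$ need not come from $i(Z)$, so there is nothing to pull back — which is precisely where the converse of Theorem~\ref{operators.6} fails in the absence of positivity.
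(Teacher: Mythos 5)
Your proof is correct and follows essentially the same route as the paper's: in both, the decisive step is that positivity of $T$ makes the image of a net decreasing to $0$ into a decreasing net whose order limit $0$ is in fact its infimum in $Y$, and since $0=i(0)\in i(Z)$ this infimum is pulled back to $Z$ by the infimum version of Lemma~\ref{properties.23y}\ref{properties.23y.eq2}. The only difference is cosmetic: the paper reduces at once to nets with $x_\alpha\downarrow 0$ by citing Lemma~\ref{operators.1005}, whereas you inline that reduction by working with the dominating net $z_\alpha\downarrow 0$.
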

\begin{proof}
As $T$ is positive, it suffices to show 
that $x_\alpha\downarrow 0$ implies 
$T(x_\alpha)\downarrow 0$. 
 Let $(x_\alpha)_{\alpha \in A}$ be a net in $X$ with $x_\alpha\downarrow 0$, then
 $i(T(x_\alpha))\downarrow 0$. By Lemma~\ref{properties.23y}\ref{properties.23y.eq2} we obtain that the infimum of the set $\left\{T(x_\alpha)\mid|\alpha\in A\right\}$ exists in $Z$ with $\inf\left\{T(x_\alpha)\mid|\alpha\in A\right\}=0$. 
 As $i$ is bipositive, we get $T(x_\alpha)\downarrow 0$. Consequently,   $T$ is order continuous.
\end{proof}

\begin{corollary}\label{ocdiamondisnice}
	Let $X$ be a pre-Riesz space, $Y$ a vector lattice and $Z$ an order dense subspace of $Y$.
	Then we have
	$L(X,Z)\cap L_{oc}^\diamond(X,Y)=L_{oc}^\diamond(X,Z)$.
\end{corollary}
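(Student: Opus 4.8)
The plan is to prove the asserted equality by two inclusions, the engine for both being the cone identity
\[
L_{oc}(X,Z)_+ = L(X,Z)\cap L_{oc}(X,Y)_+ .
\]
For ``$\subseteq$'' I would simply invoke Corollary~\ref{fizzehn}: a positive order continuous operator into $Z$ is, in particular, an operator into $Z$ whose composition with the inclusion is order continuous into $Y$. For ``$\supseteq$'' I would take $T\in L(X,Z)$ with $i\circ T\in L_{oc}(X,Y)_+$; since $T$ has its range in $Z$ and $Z_+=Z\cap Y_+$, positivity into $Y$ already forces $T(X_+)\subseteq Z_+$, so $T$ is positive into $Z$, and Theorem~\ref{operators.7} upgrades the order continuity of $i\circ T$ to order continuity of $T$ into $Z$.

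Granting this, the inclusion $L_{oc}^\diamond(X,Z)\subseteq L(X,Z)\cap L_{oc}^\diamond(X,Y)$ is immediate: any $T=T_1-T_2$ with $T_1,T_2\in L_{oc}(X,Z)_+$ lies in $L(X,Z)$, and by the cone identity $T_1,T_2\in L_{oc}(X,Y)_+$, so $T\in L_{oc}^\diamond(X,Y)$.

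The real content is the reverse inclusion $L(X,Z)\cap L_{oc}^\diamond(X,Y)\subseteq L_{oc}^\diamond(X,Z)$, and I would reduce it to a single majorization statement: it suffices to show that every $T\in L(X,Z)\cap L_{oc}^\diamond(X,Y)$ admits an operator $U\in L_{oc}(X,Z)_+$ with $U\ge T$. Indeed, given such $U$, the operator $U-T$ maps into $Z$, is positive, and is order continuous into $Y$ (both $U$, via Corollary~\ref{fizzehn}, and $T$ are), so Theorem~\ref{operators.7} places $U-T$ in $L_{oc}(X,Z)_+$; then $T=U-(U-T)$ exhibits $T$ as a difference of two elements of $L_{oc}(X,Z)_+$, i.e.\ $T\in L_{oc}^\diamond(X,Z)$.

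The main obstacle is precisely the construction of this $Z$-valued positive order continuous majorant $U$. From $T\in L_{oc}^\diamond(X,Y)$ one does obtain a majorant $S_1\in L_{oc}(X,Y)_+$ with $S_1\ge T$, but its values lie in $Y$ and need not lie in $Z$; and an arbitrary positive order continuous $Y$-valued operator cannot in general be dominated by a $Z$-valued one, so the hypothesis that $T$ itself maps into $Z$ has to be exploited. The natural attempt is a Riesz--Kantorovich-type formula $U(x):=\sup\{T(x') : 0\le x'\le x\}$ for $x\in X_+$, using Proposition~\ref{operators.3} to guarantee that $T$ is order bounded and the order denseness of $Z$ in $Y$ to try to place the supremum back in $Z$. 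The delicate points, where I expect the difficulty to concentrate, are the additivity of $x\mapsto U(x)$ (which would be automatic under the Riesz decomposition property, not assumed for $X$) and the existence of the supremum in $Y$ (which is not Dedekind complete); securing $U$ is the crux on which the whole inclusion rests.
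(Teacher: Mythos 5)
Your positive-cone identity $L_{oc}(X,Z)_+ = L(X,Z)\cap L_{oc}(X,Y)_+$ is correct, and it is exactly what the paper's machinery delivers: Corollary~\ref{fizzehn} gives one inclusion and Theorem~\ref{operators.7} the other. The easy inclusion $L_{oc}^\diamond(X,Z)\subseteq L(X,Z)\cap L_{oc}^\diamond(X,Y)$ and your reduction of the converse inclusion to the existence of a majorant $U\in L_{oc}(X,Z)_+$ of $T$ are also sound. Note that the paper offers no proof at all of Corollary~\ref{ocdiamondisnice}; it is presented as an immediate consequence of Theorems~\ref{operators.6} and~\ref{operators.7}, which in fact yield only the cone identity you proved. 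So up to this point you have been more careful than the source.

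The gap you flagged, however, is not merely delicate: it cannot be closed, because the inclusion $L(X,Z)\cap L_{oc}^\diamond(X,Y)\subseteq L_{oc}^\diamond(X,Z)$ is false, and the paper's own Example~\ref{operators.7a} refutes it. There $X=\ell_0^\infty$ is a vector lattice, $Z$ is order dense in the vector lattice $Y$, and $T\colon X\rightarrow Z$ is written as $T=T_1-T_2$, where $T_1,T_2\colon X\rightarrow Y$ are positive (part (a)) and order continuous (part (c)); hence $T\in L(X,Z)\cap L_{oc}^\diamond(X,Y)$. But part (d) of that example shows $T\notin L_{oc}(X,Z)$, and since $L_{oc}^\diamond(X,Z)\subseteq L_{oc}(X,Z)$ (as noted right after the definition of the directed part), we get $T\notin L_{oc}^\diamond(X,Z)$. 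Remark~\ref{some_remark}(a) even records that this $T$ lies outside $L_{oc}^\diamond(X,Z)$, yet draws only the weaker conclusion $L(X,Z)\cap L_{oc}(X,Y)\cap L_r(X,Y)\neq L_{oc}^\diamond(X,Z)$; the same witness contradicts the corollary as printed, so the statement itself is inconsistent with the paper's own example. In particular, the majorant $U\in L_{oc}(X,Z)_+$ you were trying to construct cannot exist for this $T$: by your own (correct) reduction, its existence would place $T$ in $L_{oc}^\diamond(X,Z)\subseteq L_{oc}(X,Z)$, a contradiction. Your instinct about where the difficulty concentrates was exactly right, and no Riesz--Kantorovich argument can rescue it: in the counterexample $X=\ell_0^\infty$ even has the RDP, so the obstruction is not additivity of $x\mapsto\sup\{T(x')\mid 0\leq x'\leq x\}$ but the genuine failure of such suprema to exist in, or belong to, the non-Dedekind-complete space $Z$. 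The only part of Corollary~\ref{ocdiamondisnice} that is actually true, and that your proposal establishes, is the cone identity and the inclusion $L_{oc}^\diamond(X,Z)\subseteq L(X,Z)\cap L_{oc}^\diamond(X,Y)$.
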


Theorem~\ref{operators.6} yields the inclusion $L(X,Z)\cap L_{oc}(X,Y)\supseteq L_{oc}(X,Z)$. In the following Example~\ref{operators.7a} we show that the converse of Theorem~\ref{operators.6} is not true, in general.
Moreover, in the example we have
\[L(X,Z)\cap L_{oc}(X,Y)\not\sub L_{oc}(X,Z),\]
i.e.\ the statement in Corollary \ref{ocdiamondisnice} is not true for sets of order continuous operators, respectively.
\begin{example}\label{operators.7a}
\textit{
	For an Archimedean vector lattice $X$ and an Archimedean pre-Riesz space $Z$, we give an example of an operator $T\in L(X,Z)\cap L_{oc}(X,Z^\delta)$ with $T\notin L_{oc}(X,Z)$.}

Consider the Archimedean vector lattice
\[Y:=\left\{(y_i)_{i\in\ZZ}\in\ell^\infty(\ZZ) \mid| \lim_{i\to\infty} y_i \t{ exists}\right\}\]
with point-wise order and its subspace
\[Z:=\left\{(z_i)_{i\in\ZZ}\in Y \mid| \sum_{k=1}^\infty \frac{z_{-k}}{2^k} = \lim_{i\to\infty} z_i\right\}.\]
In \c[Example~5.2]{2} it is shown that $Z$ is order dense in $Y$. 
Moreover, a sequence in $Z$ is given which does not order converge in $Z$ but order converges in $Y$. For an appropriate vector lattice $X$, we use a similar idea to provide an operator $T\colon X\rightarrow Z$ which is not order continuous, but is in $L_{oc}(X,Z^\delta)$.
Instead of showing $T\in L_{oc}(X,Z^\delta)$ we establish that $T\in L_{oc}(X,Y)$. Then by Theorem~\ref{properties.20} we can identify $Y^\delta$ and $Z^\delta$. 
 Due to Corollary~\ref{operators.xxx} 
 we then obtain $T\in L_{oc}(X,Z^\delta)$.

(a) Let
\[X=\ell_0^\infty:=\left\{(x_i)_{i\in\NN}\mid| \exists c\in\RR\hs \exists k\in\NN\hs \forall i\geq k\colon x_i=c \right\}\]
be the vector lattice of all real eventually constant sequences endowed with point-wise order. We define 
\[T\colon X\rightarrow Z,\quad (x_n)_{n\in\NN}\mapsto (x_n-x_{n-1})_{n\in\ZZ}\]
with the convention $x_k=0$ for every $k\in\ZZ\ohne\NN$.
For every $x\in X$ it follows that the sequence $T(x)$ is eventually zero. Moreover, $T(x)_k=0$ holds for every $k\in\ZZ\ohne\NN$. Therefore $T(x)\in Z$. 

We will first show that $T\in L_{oc}(X,Y)$.
Observe that $T$ can be decomposed into two 
positive operators, i.e.\ $T=T_1-T_2$, where for $x=(x_n)_{n\in\NN} \in\ell_0^\infty$ we define
\begin{align*}
T_1\colon &X \rightarrow Y,   &T_2\colon &X \rightarrow Y,\\
&x \mapsto (x_n)_{n\in\ZZ} & &x \mapsto (x_{n-1})_{n\in\ZZ}
\end{align*}
with the convention $x_n:=0$ for $n\in\ZZ\ohne\NN$. For every $x\in X$ the sequences $T_1(x)$ and $T_2(x)$ are convergent for $n\rightarrow\infty$ and therefore are contained in $Y$. 
We establish the order continuity of $T_1$ and $T_2$ in (c) below to obtain $T\in L_{oc}(X,Y)$. For this, we next provide in (b) properties of the order convergence in $X$ and in $Y$.

(b) (i) A net $(x^{(\alpha)})_\alpha$ in $X$ satisfies $x^{(\alpha)} \downarrow 0$ if and only if $x^{(\alpha)}_n \downarrow_\alpha 0$ for every $n\in\NN$. Indeed, let  $(x^{(\alpha)})_\alpha\in X$ with $x^{(\alpha)} \downarrow 0$. Then $x^{(\alpha)}_n \downarrow_\alpha$ for every $n\in\NN$. Assume that there is $N\in\NN$ such that $x^{(\alpha)}_N \downarrow_\alpha c>0$. Let $s\in X$ be the sequence which is zero in all components except in the component $N$ where it is $c>0$. Then it follows $s\leq x^{(\alpha)}$ for every $\alpha$, a contradiction to $x^{(\alpha)} \downarrow 0$. The converse implication is immediate.

(ii) If for a net $(x^{(\alpha)})_\alpha$ in $Y$ we have for every component $n\in\ZZ$ that $x^{(\alpha)}_n\downarrow_\alpha 0$, then it follows $x^{(\alpha)}\downarrow 0$. Indeed, from $x^{(\alpha)}_n\downarrow_\alpha$ for every $n\in\ZZ$ we obtain $x^{(\alpha)}\downarrow$. Assume that there is an $x=(x_k)_{k\in\ZZ}\in Y$ with $0<x\leq x^{(\alpha)}$ for every $\alpha$. Then there exists $k\in\ZZ$ such that $x_k\neq 0$, a contradiction to $x^{(\alpha)}_k\downarrow_\alpha 0$.

(iii) Consider the sequence $x=(x^{(n)})_{n\in\NN}$ of elements $x^{(n)}=(x^{(n)}_k)_{k\in\ZZ}\in Y$ such that for every fixed $n\in\NN$ we have $x^{(n)}_n=1$, $x^{(n)}_{n+1}=-1$ and $x^{(n)}_k=0$ otherwise. Then $x$ order converges in $Y$ to zero, i.e.\ $x^{(n)}\xrightarrow{o} 0$. Indeed, let $y=(y^{(n)})_{n\in\NN}$ be a sequence in $Y$ such that for every fixed $n\in\NN$
we have $y^{(n)}_k=1$ for every $k\geq n$ and $y^{(n)}_k=0$ otherwise. 
By (ii) we obtain $y^{(n)} \downarrow 0$. It follows $\pm x^{(n)} \leq y^{(n)} \downarrow 0$, i.e.\ we have $x^{(n)}\xrightarrow{o} 0$ in $Y$.

(c) We show the order continuity of $T_1$. As $T_1$ is positive, it suffices to show that for every net $(x^{(\alpha)})_\alpha$ in $X$ with $x^{(\alpha)} \downarrow 0$ it follows $T_1(x^{(\alpha)})\downarrow 0$ in $Y$. Let $x=(x^{(\alpha)})_\alpha$ be a net in $X$ with $x^{(\alpha)} \downarrow 0$. For the component $T_1(x^{(\alpha)})_n$ of the net $T_1(x^{(\alpha)})$ we have $T_1(x^{(\alpha)})_n = 0$ if $n\in\ZZ\ohne\NN$ and $T_1(x^{(\alpha)})_n = x^{(\alpha)}_n$ if $n\in\NN$. By (b)(i) it follows for each component $n\in\NN$ that $T_1(x^{(\alpha)})_n=x^{(\alpha)}_n \downarrow_\alpha 0$. Applying (b)(ii) we obtain in $Y$ that $T_1(x^{(\alpha)})\downarrow 0$, i.e.\ $T_1$ is order continuous.

Analogously we find that if a net $(x^{(\alpha)})_\alpha$ in $X$ is such that $x^{(\alpha)} \downarrow 0$, then it follows $T_2(x^{(\alpha)})_n=x^{(\alpha)}_{n-1}\downarrow_\alpha 0$ and thus $T_2(x^{(\alpha)})\downarrow 0$ in $Y$. That is, $T_2$ is order continuous. We conclude that $T=T_1-T_2\in L_{oc}(X,Y)$.

(d) We establish that $T\not\in L_{oc}(X,Z)$. 
For every fixed $n\in\NN$ define $e^{(n)}=(e^{(n)}_k)_{k\in\NN}\in X$ by $e^{(n)}_n=1$ and $e^{(n)}_k=0$ for $k\neq n$.
Similarly, for every fixed $n\in\NN$ let $y^{(n)}=(y^{(n)}_k)_{k\in\NN}\in X$ with $y^{(n)}_k=1$ for $k\geq n$ and zero otherwise. Then by (b)(i) for the sequence $y=(y^{(n)})_{n\in\NN}$ we have $y^{(n)} \downarrow 0$. Thus it follows $0\leq e^{(n)} \leq y^{(n)} \downarrow 0$, i.e.\ $e^{(n)}\xrightarrow{o} 0$ in $X$.

We show that $(T(e^{(n)}))_{n\in\NN}$ does not order converge in $Z$.
Observe that for every $n\in\NN$ we have $T(e^{(n)}) = x^{(n)}$ with $x^{(n)}$ as defined in (b)(iii).
Assume that $T$ is order continuous, then for the sequence $(x^{(n)})_{n\in\NN}$ in $Z$ we have $x^{(n)}\xrightarrow{o} 0$. In other words, there exists a sequence $(z^{(n)})_{n\in\NN}$ in $Z$ such that $\pm x^{(n)} \leq z^{(n)}\downarrow 0$. 
Fix $n\in\NN$ and let $m\in\NN$ be such that $m\geq n$. Then we have
\[ 1=x^{(m)}_m \leq z^{(m)}_m \leq z^{(n)}_m.\]
Due to $z^{(n)}\in Z$ we have
\[1\leq \lim_{m\rightarrow\infty} z^{(n)}_m = \sum_{k=1}^\infty \frac{z^{(n)}_{-k}}{2^k}.\]
Since this holds for every $n\in\NN$, we obtain that there exists an index $j\in\NN$ such that $z^{(n)}_{-j}\not\downarrow_n 0$ does not hold, otherwise by monotone convergence we would have $\sum_{k=1}^\infty \frac{z^{(n)}_{-k}}{2^k}\xrightarrow{n\rightarrow \infty} 0$. Thus there exists a real number $\delta\in\RR_{>0}$ such that for every $n\in\NN$ we have $z^{(n)}_{-j}\geq \delta$. Let $\omega\in Z$ be such that $\omega_{-j}=\delta$, $\omega_{-j-1}=-2\delta$ and $\omega_k=0$ for $k\in\ZZ$, where $k\neq -j$ and $k\neq -j-1$. It is immediate that $\omega\in Z$. Moreover, we have $\omega\leq z^{(n)}$ for every $n\in\NN$. However, $\omega\not\leq 0$, which is a contradiction to $\inf\left\{z^{(n)} \mid| n\in\NN\right\}=0$.
It follows that the sequence $(T(e^{(n)})){n\in\NN}$ does not order converge in $Z$ and therefore the operator $T\colon X\rightarrow Z$ is not order continuous.
\end{example}
\begin{numremark}\label{some_remark}
	(a) In Example~\ref{operators.7a} the two operators $T_1,T_2\colon X\rightarrow Y$ map into $Y$ but not into $Z$, even though we have $T_1-T_2=T\colon X\rightarrow Z$. The operator $T$ is not contained in  $L_{oc}(X,Z)$ and, hence, not in $L_{oc}^\diamond(X,Z)$. Therefore we have shown
	\[L(X,Z)\cap L_{oc}(X,Y)\cap L_r(X,Y)\neq L_{oc}^\diamond(X,Z).\]
	
	(b) 
    In the situation of  Example~\ref{operators.7a}    
    we have \[L_{oc}(X,Z)\neq L_{oc}^\diamond(X,Z).\]
    Indeed, we showed 	
	\[L_{oc}(X,Z^\delta)\cap L(X,Z)\not= L_{oc}(X,Z).\] 
	As $X$ is a vector lattice, by Proposition~\ref{atomic.0+1} we have $L_{oc}(X,Z^\delta)=L_{oc}^\diamond(X,Z^\delta)$.
		
	Now assume $L_{oc}(X,Z)= L_{oc}^\diamond(X,Z)$. Then by Corollary \ref{ocdiamondisnice} we get
	\[L_{oc}(X,Z)=L_{oc}^\diamond(X,Z)=L_{oc}^\diamond(X,Z^\delta)\cap L(X,Z)=L_{oc}(X,Z^\delta)\cap L(X,Z),\] 
	a contradiction.
\end{numremark}


\medskip
The following result is similar to Theorem~\ref{operators.7}, with the difference that here we extend not only the range but also the domain. The result yields order continuity for positive operators if they have a positive extension which is order continuous.
\begin{proposition}\label{operators.7x}
Let $X_1$ and $X_2$ be pre-Riesz spaces and $(Y_1,i_1)$, $(Y_2,i_2)$ their vector lattice covers, respectively. Let a positive operator $T\colon X_1\rightarrow X_2$ have a positive linear extension $\tilde{T}\colon Y_1\rightarrow Y_2$, i.e.\ $\tilde{T}\circ i_1=i_2\circ T$. Then the relation $\tilde{T}\in L_{oc}(Y_1,Y_2)$ implies $T\in L_{oc}(X_1,X_2)$.
\end{proposition}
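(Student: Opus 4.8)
The plan is to reduce everything to the behaviour of decreasing nets and then chase such a net through the two embeddings, invoking both directions of Lemma~\ref{properties.23y}. Since $T$ is positive, by Lemma~\ref{operators.1005} it suffices to show that every net $(x_\alpha)_\alpha$ in $X_1$ with $x_\alpha\downarrow 0$ satisfies $T(x_\alpha)\downarrow 0$ in $X_2$.

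First I would transport such a net up into $Y_1$: given $x_\alpha\downarrow 0$ in $X_1$, the infimum version of Lemma~\ref{properties.23y}\ref{properties.23y.eq1} gives $\inf_\alpha i_1(x_\alpha)=i_1(0)=0$ in $Y_1$, and bipositivity of $i_1$ keeps the net decreasing, so $i_1(x_\alpha)\downarrow 0$ in $Y_1$. Now the hypotheses on $\tilde{T}$ enter: as $\tilde{T}$ is positive and order continuous, Lemma~\ref{operators.1005} yields $\tilde{T}(i_1(x_\alpha))\downarrow 0$ in $Y_2$, and the intertwining relation $\tilde{T}\circ i_1=i_2\circ T$ rewrites this as $i_2(T(x_\alpha))\downarrow 0$ in $Y_2$.

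Finally I would pull the conclusion back down into $X_2$. The infimum $\inf_\alpha i_2(T(x_\alpha))=0=i_2(0)$ lies in $i_2(X_2)$, so the infimum version of Lemma~\ref{properties.23y}\ref{properties.23y.eq2} shows that $\inf_\alpha T(x_\alpha)$ exists in $X_2$ and, by injectivity of $i_2$, equals $0$; since positivity of $T$ forces $T(x_\alpha)$ to be decreasing, we conclude $T(x_\alpha)\downarrow 0$ and hence $T\in L_{oc}(X_1,X_2)$ by Lemma~\ref{operators.1005}. This is essentially the same ``push up -- apply order continuity -- pull down'' pattern as in the proof of Theorem~\ref{operators.7}; the only genuinely new ingredient is the first step, which lifts $x_\alpha\downarrow 0$ from $X_1$ into $Y_1$ along $i_1$ (in Theorem~\ref{operators.7} the domain was not extended, so this step was absent). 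Accordingly I do not expect a real obstacle: the only care needed is to call the two halves of Lemma~\ref{properties.23y} in the correct directions -- part \ref{properties.23y.eq1} to lift the infimum into $Y_1$ and part \ref{properties.23y.eq2} to recover it in $X_2$ -- and to use bipositivity of $i_1$ and $i_2$ to preserve monotonicity of the nets and to identify the limit as $0$.
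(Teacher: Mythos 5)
Your proposal is correct and follows essentially the same route as the paper's own proof: reduce to decreasing nets via positivity, lift $x_\alpha\downarrow 0$ into $Y_1$ by Lemma~\ref{properties.23y}\ref{properties.23y.eq1}, apply positivity and order continuity of $\tilde{T}$ together with the intertwining relation, and pull the conclusion back to $X_2$ by Lemma~\ref{properties.23y}\ref{properties.23y.eq2}. The extra remarks on bipositivity and injectivity of the embeddings are details the paper leaves implicit, not a difference in method.
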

\begin{proof}
 Let $(x_\alpha)_\alpha$ be a net in $X_1$ with $x_\alpha\downarrow 0$.
As $T$ is positive, it suffices to show $T(x_\alpha)\downarrow 0$.
By Lemma~\ref{properties.23y}\ref{properties.23y.eq1} the infimum of the set $\left\{i_1(x_\alpha)\mid| \alpha \in A\right\}$ exists in $Y_1$ and equals $0$, i.e.\ $i_1(x_\alpha)\downarrow 0$.
As $\tilde{T}$ is order continuous and  positive, it follows that $\tilde{T}(i_1(x_\alpha))\downarrow 0$ in $Y_2$. 
We use 
$\tilde{T}(i_1(x_\alpha))=i_2(T(x_\alpha))$ in $Y_2$. By Lemma~\ref{properties.23y}\ref{properties.23y.eq2} the relation $i_2(T(x_\alpha))\downarrow 0$ in $Y_2$ implies $T(x_\alpha)\downarrow 0$ in $X_2$.
\end{proof}

\section{Embedding a pre-Riesz space of order continuous operators: a counterexample}\label{atomic.sec1}

Let $X$ and $Z$ be ordered vector spaces, where $X$ is directed and $Z$ is Archimedean. Then $L_r(X,Z)$ is directed and Archimedean and therefore a pre-Riesz space. By Proposition~\ref{atomic.0+2}, $L_{oc}^\diamond(X,Z)$ is an Archimedean pre-Riesz space as well. In general, $L_r(X,Z)$ and $L_{oc}^\diamond(X,Z)$ are not vector lattices, see below. A natural next step is to look for suitable vector lattice covers of $L_r(X,Z)$ and $L_{oc}^\diamond(X,Z)$, respectively, which can be represented as spaces of operators.

If, in addition, $X$ has the RDP, then by Theorem~\ref{prelim.53} the operator space  $L_r(X,Z^\delta)$ is a Dedekind complete vector lattice. Furthermore, by Theorem~\ref{prelim.1004} the space $L_{oc}(X,Z^\delta)$ is a band in $L_r(X,Z^\delta)$ and thus by Lemma~\ref{properties.0} likewise a Dedekind complete vector lattice. An idea for a vector lattice cover of $L_r(X,Z)$ might be to look at the canonical embedding $L_r(X,Z)\sub L_r(X,Z^\delta)$.
Moreover, by Corollary~\ref{operators.xxx} we have $L_{oc}^\diamond(X,Z)\sub L_{oc}(X,Z^\delta)$. The question arises whether these inclusions are order dense.

The first inclusion is dealt with in \c[Counterexamples~2.10 and 2.13]{Schouten}. There, for the normed vector lattice $X:=\ell_p$ of $p$-summable sequences, where $p\in \hs]1,\infty[$, and for $Z=C[0,1]$ it is shown that $L_r(X,Z)$ is not majorizing in $L_r(X,Z^\delta)$ and thus not order dense.

We deal with the second inclusion $L_{oc}^\diamond(X,Z)\sub L_{oc}(X,Z^\delta)$ and give an example where this inclusion is not order dense.
To that end, we consider the vector lattice $X=Z=\ell_0^\infty$ of eventually constant sequences\footnote{We focus on this example, since the order continuous operators are characterized in \c{3}.}. As is established in \c[Theorem~5.1]{AbrWick1991} by Abramovich and Wickstead, the pre-Riesz space $L_r(\ell_0^\infty)$ is not a vector lattice. A vector lattice cover of $L_r(\ell_0^\infty)$ is constructed in \c{3}. However, in this example the vector lattice cover is not given as a space of operators. In Example~\ref{atomic.3a} below we establish that $L_{oc}^\diamond(\ell_0^\infty)$ is not a vector lattice.
Notice that $Z^\delta$ is the space $\ell^\infty$ of bounded sequences. Example~\ref{atomic.1} demonstrates that $L_r(\ell_0^\infty)$ is not order dense in $L_r(\ell_0^\infty,\ell^\infty)$. Based on this result, in Example~\ref{atomic.1x} we show that $L_{oc}^\diamond(\ell_0^\infty)$ is not majorizing and therefore not order dense in $L_{oc}(\ell_0^\infty,\ell^\infty)$. Hence, $L_{oc}(\ell_0^\infty,\ell^\infty)$ is not a vector lattice cover of $L_{oc}^\diamond(\ell_0^\infty)$. 
This shows that the straightforward idea to make the range space Dedekind complete is not a suitable approach to obtain a vector lattice cover.

\begin{example}\label{atomic.1}
\textit{The space $L_r(\ell_0^\infty)$ is not majorizing and therefore not order dense in $L_r(\ell_0^\infty,\ell^\infty)$.}

Let $\ell_0^\infty$ be the vector lattice of all eventually constant sequences on $\NN$ with the standard order. In \c[Theorem~5.1]{AbrWick1991} it is shown that the operator space $L_r(\ell_0^\infty)$ does not have the RDP and therefore is not a vector lattice. However, as $\ell_0^\infty$ is Archimedean and the space of regular operators is automatically directed, it follows that $L_r(\ell_0^\infty)$ is pre-Riesz.

The vector lattice $\ell_0^\infty$ is not Dedekind complete, and $\ell^\infty$ is its Dedekind completion. Denote by $e_i$ the sequences with $1$ at the $i$th entry and $0$ elsewhere, and by $\1$ the constant-one sequence.
The set $\mathbfcal{B}:=\left\{e_i\mid|i\in\NN\right\}\cup\left\{\1\right\}$ yields an algebraic basis $(\1, e_1,e_2,\ldots)$ of $\ell_0^\infty$.
In the subsequent table we define a mapping $T\in L_r(\ell_0^\infty,\ell^\infty)$ on the basis elements and extend $T$ linearly. The blanks are filled by $0$.

\begin{tabular}{c | l}
$b\in \mathbfcal{B}$		&	$T(b)$	\\
\hline
$e_1$ &	(\textbf{1} \quad 0 \quad \textbf{1} \quad 0 \quad 0 \quad \textbf{1} \quad 0 \quad 0 \quad 0 \quad \textbf{1} \quad 0 \quad 0 \quad 0 \quad 0 \quad $\hdots$)\\
$e_2$ &	(0 \quad \textbf{1} \quad 0 \quad \textbf{1} \quad 0 \quad 0 \quad \textbf{1} \quad 0 \quad 0 \quad 0 \quad \textbf{1} \quad 0 \quad 0 \quad 0 \quad $\hdots$)\\
$e_3$ &	(\phantom{0} \quad \phantom{0} \quad 0 \quad 0 \quad \textbf{1} \quad 0 \quad 0 \quad \textbf{1} \quad 0 \quad 0 \quad 0 \quad \textbf{1} \quad 0 \quad 0 \quad $\hdots$)\\
$e_4$ &	(\phantom{0} \quad \phantom{0} \quad \phantom{0} \quad \phantom{0} \quad \phantom{0} \quad 0 \quad 0 \quad 0 \quad \textbf{1} \quad 0 \quad 0 \quad 0 \quad \textbf{1} \quad 0 \quad $\hdots$)\\
$e_5$ &	(\phantom{0} \quad \phantom{0} \quad \phantom{0} \quad \phantom{0} \quad \phantom{0}  \quad \phantom{0} \quad \phantom{0} \quad \phantom{0} \quad \phantom{0} \quad 0 \quad 0 \quad 0 \quad 0 \quad \textbf{1} \quad $\hdots$)\\
$\vdots$ & $\hdots$\\
$\1$ & (1 \quad 1 \quad 1 \quad 1 \quad 1 \quad 1 \quad 1 \quad 1 \quad 1 \quad 1 \quad 1 \quad 1 \quad 1 \quad 1 \quad $\hdots$).
\end{tabular}

It is clear that the operator $T$ maps sequences in $\ell_0^\infty$ to sequences in $\ell^\infty$. We establish that $T$ is positive (and, hence, in $L_r(\ell_0^\infty,\ell^\infty)$) and can not be majorized by an operator in $L_r(\ell_0^\infty)$.

To show that $T$ is positive, notice first that for every $n\in\NN$ we have $\sum_{i=1}^{n}T(e_i)\leq T(\1)$. Let $x=\lambda_0 \1 + \sum_{i=1}^n \lambda_i e_i \in \ell_0^\infty$ be a positive element, i.e. for every $i\in\NN$ we have $\lambda_0 + \lambda_i \geq 0$. Thus
\begin{align*}
T(x) \hs &=\hs \lambda_0 T(\1) + \sum_{i=1}^n \lambda_i T(e_i) \hs\geq\hs \lambda_0 \sum_{i=1}^{n}T(e_i) + \sum_{i=1}^n \lambda _i T(e_i) \hs=\hs\\
 &=\hs \sum_{i=1}^{n}(\lambda_0+\lambda_i)T(e_i) \hs\geq\hs 0,
\end{align*}
i.e.\ $T$ is positive and therefore regular. Thus $T\in L_r(\ell_0^\infty,\ell^\infty)$.

It is left to show that $T$ can not be majorized by an operator in $L_r(\ell_0^\infty)$.
Assume there exists an operator $S\in L_r(\ell_0^\infty)$ with $T\leq S$ (note that we view $S\in L_r(\ell_0^\infty)$ as an operator in $L_r(\ell_0^\infty,\ell^\infty)$). Due to $T\geq 0$ it follows $S\geq 0$. Furthermore, for each element $b\in \mathbfcal{B}$ we get $T(b) \leq S(b)$ and thus $\limsup T(b) \leq \limsup S(b)$. Since $S(b)$ is an eventually constant sequence, we have $\limsup S(b) = \lim S(b)$. In particular, we have
\begin{equation}\label{atomic.1.eq1}
\forall i\in\NN\colon \quad 1 = \limsup T(e_i) \leq \lim S(e_i).
\end{equation}
Moreover, for each $n\in\NN$ the inequality $\sum_{i=1}^n e_i \leq \1$ is valid.
Since $S$ is positive, it follows for every $n\in\NN$
\[\sum_{i=1}^n S(e_i) \leq S(\1) \quad \t{ and }\quad  \sum_{i=1}^n \lim S(e_i) = \lim \left(\sum_{i=1}^n S(e_i)\right) \leq \lim S(\1).\]
However, due to \eqref{atomic.1.eq1} we have for every $n\in\NN$ \[n \leq \sum_{i=1}^n \lim S(e_i) \leq \lim S(\1),\]
which is a contradiction to $S(\1)$ being an eventually constant sequence.
\end{example}
\begin{remark}
The space $X:=\ell_0^\infty$ is a normed vector lattice, has an algebraic basis and $\1$ as an order unit. The above example shows that even under these strong additional conditions $L_r(X,X)$ is not order dense in $L_r(X,X^\delta)$.

Note that in the above mentioned Example in \c{Schouten} the underlying spaces are Banach lattices. Moreover, this example is more elaborate and uses the norms in the computations.
\end{remark}

\begin{example}\label{atomic.3a}
\textit{The pre-Riesz space $L_{oc}^\diamond(\ell_0^\infty)$ is pervasive, but not a vector lattice.}

In the subsequent steps (i) to (vi) we recall the setting and the results of \c[Example~3.1]{3}.

(i)  The authors consider a lattice isomorphism between $\ell_0^\infty$ and the space $c_{00}$ of finite sequences on $\NN_0$ endowed with the order induced by a certain cone $K$. Using this isomorphism, in \c[Example~3.1~(a),~(b)~and~(j)]{3} they represent positive, regular and order continuous operators on $\ell_0^\infty$ as matrices. We recall the procedure. Notice that the cone $K$ on $c_{00}$ is given by
\[K = \left\{ (x_i)_{i\in\NN_0}\in c_{00} \mid| x_0\geq 0 \t{ and } x_0+x_i \geq 0 \t{ for every } i\in\NN\right\}.\]
Let $\mathbfcal{B}$ be as in Example~\ref{atomic.1}. The mapping
\begin{eqnarray*}
f\colon \left\{\1,e_1,e_2,\ldots\right\} \sub \ell_0^\infty &\longrightarrow& c_{00}\\
\1 &\longmapsto& e_0,\quad e_i \longmapsto e_i \quad (i\in\NN)
\end{eqnarray*}
is a basis isomorphism and can be extended linearly to all of $\ell_0^\infty$. This extension which we also call $f$ is an order isomorphism between the vector lattices $\ell_0^\infty$ (with pointwise order)  and $(c_{00},K)$.
Thus every linear operator $\hat{A}$ on $\ell_0^\infty$ induces a linear operator $\tilde{A}$ on $c_{00}$ by the relationship $\tilde{A}:= f \circ \hat{A} \circ f^{-1}$, following the scheme 
\[
\begin{xy}
  \xymatrix{
      (\ell_0^\infty, (\ell_0^\infty)_+) \ar[rr]^{\hat{A}} \ar[d]_f    & &  (\ell_0^\infty, (\ell_0^\infty)_+) \ar[d]^f \\ 
      (c_{00},K) \ar[rr]^{\tilde{A}}             & &  (c_{00},K).
  }
\end{xy}
\]
Furthermore, one represents $\hat{A}$ as an infinite matrix $A:=(a_{ij})_{i,j\in\NN_0}$ with columns which are eventually zero, where $a_{i0}$ is the $i$th component of $f(\hat{A}\1)$ and $a_{ij}$ is the $i$th component of $f(\hat{A}e_j)$, for $j\in\NN$ and $i\in\NN_0$. We use the notation
\[\mathcal{L} := \left\{ A\in \RR^{\NN_0 \times \NN_0} \mid| \t{the colums of } A \t{ are eventually zero}\right\}.\]
We will say that a matrix in $\mathcal{L}$ is positive, regular or order continuous if it represents a positive, regular or order continuous operator on $\ell_0^\infty$, respectively. Let $A =(a_{ij})_{i,j\in\NN_{0}}\in \mathcal{L}$.

(ii) The matrix $A$ is \textit{positive} if and only if for every fixed $i\in\NN$ the following two conditions are satisfied:
\begin{enumerate}
\item[($\alpha$)]\label{atomic.3a.it2.italpha} $a_{0j}+a_{ij}\geq 0$ for all $j\in\NN$.
\item[($\beta$)]\label{atomic.3a.it2.itbeta} $a_{00}+a_{i0} \geq \sum_{j=1}^\infty (a_{0j}+a_{ij})$.
\end{enumerate}
(iii) The matrix $A$ is \textit{regular} if and only if the sequence $\left(\sum_{j=0}^\infty|a_{ij}|\right)_{i\in\NN_0}$ of absolute row sums is bounded. The set of all regular matrices is denoted by
\[\mathcal{R}:=\left\{A=(a_{ij})_{i,j}\in\RR^{\NN_0 \times \NN_0} \mid|  A \in\mathcal{L} \t{ and }\left(\sum_{j=0}^\infty|a_{ij}|\right)_{i\in\NN_0} \t{ is bounded}\right\}.\]
(iv) The matrix $A$ is \textit{order continuous} if and only if for every fixed $i\in\NN$ we have
\[\sum_{j=1}^\infty (a_{ij}+a_{0j}) = a_{00} + a_{i0}.\]
The set of all order continuous matrices is denoted by
\[\mathcal{N}:=\left\{A=(a_{ij})_{i,j}\in\RR^{\NN_0 \times \NN_0} \mid|  A \in\mathcal{R} \t{ and }\forall i\in\NN\colon \sum_{j=1}^\infty (a_{ij}+a_{0j}) = a_{00} + a_{i0}\right\}.\]
(v) By \c[Theorem~4.1]{AbrWick1991} we have $L_r(\ell_0^\infty) = L_b(\ell_0^\infty)$ and by \c[Theorem~5.1]{AbrWick1991} the pre-Riesz space $L_r(\ell_0^\infty)$ which corresponds to $\mathcal{R}$ does not have the RDP and thus is not a vector lattice. In \c[Example~3.1~(c),~(d)~and~(e)]{3} a vector lattice cover $Y$ of $\mathcal{R}$ is computed. The vector lattice $Y$ consists of certain matrices, such that the order on $Y$ is given componentwise. We briefly recall the construction.

Consider the following properties of an infinite matrix $B=(b_{ij})_{ij}\in\RR^{\NN \times \NN_0}$:
\begin{enumerate}
\item[(a)]\label{atomic.3a.it5.ita} For every $j\in\NN$ the column sequence $(b_{ij})_{i\in\NN}$ is eventually constant.
\item[(b)]\label{atomic.3a.it5.itb} For column limits $\disp\beta_j:=\lim_{i\to\infty} b_{ij}$, where $j\in\NN$, we have $\disp\sum_{j=1}^\infty |\beta_j| < \infty$.
\item[(c)]\label{atomic.3a.it5.itc} The column sequence $(b_{i0})_{i\in\NN}$ is bounded.
\item[(d)]\label{atomic.3a.it5.itd} The sequence of absolute row sums $\disp\left( \sum_{j=1}^\infty |b_{ij}| \right)_{\hspace*{-1.5mm}i\in\NN}$  is bounded.
\end{enumerate}
The space
$Y := \left\{ B\in\RR^{\NN \times \NN_0} \mid| B \t{ satisfies (a), (b), (c) and (d)} \right\}$
with coordinatewise order is a vector lattice, and the mapping
\begin{align*}
F\colon &\mathcal{R} \longrightarrow Y \\
(a_{ij}&)_{ij} \mapsto \begin{cases}
   					a_{0j} + a_{ij} & \t{ for } i,j\in\NN,\\
   					a_{00} +a_{i0} -\sum_{k=1}^\infty (a_{0k} + a_{ik})
   							& \t{ for } j=0\t{ and } i\in\NN.
   						\end{cases}
\end{align*}
is an order dense embedding of $\mathcal{R}$ into $Y$, i.e.\ $Y$ is a vector lattice cover of $\mathcal{R}$, as was shown in \c[Example~3.1~(e)]{3}. Moreover,  $\mathcal{R}$ is even pervasive. Notice that it is not clear whether the elements of the space $Y$ can be viewed as operators, or rather, if one can find a suitable space on which the elements of $Y$ are acting as operators.

The acting of $F$ on a matrix $A= (a_{ij})_{ij}$ can be interpreted as three separate successive operations:
\begin{enumerate}
\item[(I)]\label{atomic.3a.it5.itI} Add the $0$th row of $A$ onto each other row.
\item[(II)]\label{atomic.3a.it5.itII} Delete the $0$th row of the resulting matrix.
\item[(III)]\label{atomic.3a.it5.itIII} In the resulting matrix fix a row $i\in\NN$. Sum all entries of the row sequence starting from $j=1$ and subtract this sum from the $0$th entry of the sequence. Replace the $0$th entry of the sequence by this result. Let the other entries of the row unchanged.
Do this procedure with each fixed row $i$ of $A$ for every $i\in\NN$.
\end{enumerate}

(vi) It is shown in \c[Example~3.1~(j)]{3} that $\mathcal{N}$ is a directed subspace of the Archimedean pre-Riesz space $\mathcal{R}$ and therefore is pre-Riesz. Moreover, \c[Example~3.1~(k)]{3} yields that $\mathcal{N}$ is a band in $\mathcal{R}$ and we have
\[\mathcal{N} = \left\{A \in\mathcal{R} \mid| \forall i\in\NN\colon F(A)_{i0}=0\right\}.\]
An extension band of $\mathcal{N}$ in $Y$ is given by
\[\mathcal{B}:=\left\{B=(b_{ij})_{ij}\in\RR^{\NN \times \NN_0} \mid| B\in Y \t{ and }\forall i\in\NN\colon b_{i0}=0\right\}.\]

We use the statements (i) to (vi) to show that the space $L_{oc}^\diamond(\ell_0^\infty)$ (which corresponds to $\mathcal{N}$) is not a vector lattice. First notice that by (vi) the space
$L_{oc}(\ell_0^\infty)$
is directed and thus $L_{oc}^\diamond(\ell_0^\infty) = L_{oc}(\ell_0^\infty)$. We provide a vector lattice cover of the pre-Riesz space $\mathcal{N}$, making use of the vector lattice cover $Y$ of $\mathcal{R}$ and of the extension band $\mathcal{B}$ of $\mathcal{N}$. Moreover, we will use Theorem~\ref{closedness.1001}. As $\mathcal{B}$ is a subset of $Y$, every matrix in $\mathcal{B}$ admits the conditions (a)-(d) from (v). Therefore we can describe every matrix $A=(a_{ij})_{i\in\NN,j\in\NN_o}\in\mathcal{B}$ using the properties (a), (b), (d) and
\begin{center}
(C) The $0$th column $(a_{i0})_i$ is zero,
\end{center}
i.e.\ 
$\mathcal{B} =\left\{B\in\RR^{\NN \times \NN_0} \mid| \t{(a), (b), (C) and (d) hold.}\right\}$.

By (vi) the matrix space $\mathcal{N}$ is a directed band in $\mathcal{R}$ and, in particular, a directed ideal. 
Theorem~\ref{something}~\ref{something.i4} yields that $\mathcal{N}$ has an extension ideal in $Y$.
In particular, the smallest extension ideal $Y_{oc}$ of $\mathcal{N}$ exists in $Y$. By Theorem~\ref{something}~\ref{something.i2} and \ref{something.i4} it follows that $\mathcal{N}$ is majorizing in $Y_{oc}$, hence \[Y_{oc}=\left\{B\in Y\mid| \exists A\in \mathcal{N}\colon |B|\leq A\right\}.\]
As $Y_{oc}$ is an ideal in a vector lattice $Y$, it follows that $Y_{oc}$ is a vector lattice in its own right, endowed with the coordinatewise order induced by $Y$.

By (v) we have that $\mathcal{R}$ is pervasive in $Y$. Moreover, since $\mathcal{N}$ is a directed band in $\mathcal{R}$, it is, in particular, a directed ideal. Therefore by Theorem~\ref{closedness.1001} we obtain that $Y_{oc}$ is a vector lattice cover of the pre-Riesz space $\mathcal{N}$ and that $\mathcal{N}$ is pervasive. 

\medskip
Using the fact that $Y_{oc}$ is a vector lattice cover of the pre-Riesz space $\mathcal{N}$ we now can proceed to establish that $\mathcal{N}$ is not a vector lattice.
Consider two operators
$\hat{P},\hat{Q}\in L_r(\ell_0^\infty)$ which are represented by the matrices
\begin{equation*}
P:=\left( \begin{array}{ccccccc}
2 & 1 & 0 & 0 & 0 & 0 & \hdots \\
0 & 1 & 0 & 0 & 0 & 0 & \hdots \\
0 & 0 & 1 & 0 & 0 & 0 & \hdots \\
0 & 0 & 0 & 1 & 0 & 0 & \hdots \\
0 & 0 & 0 & 0 & 1 & 0 & \hdots \\
0 & 0 & 0 & 0 & 0 & 1 & \hdots \\
\vdots & \vdots & \vdots & \vdots & \vdots & \vdots & \ddots
\end{array} \right)
\quad\t{ and }\quad
Q:=\left( \begin{array}{ccccccc}
2 & 1 & 0 & 0 & 0 & 0 & \hdots \\
0 & 1 & 0 & 0 & 0 & 0 & \hdots \\
0 & 0 & 0 & 1 & 0 & 0 & \hdots \\
0 & 0 & 0 & 1 & 0 & 0 & \hdots \\
0 & 0 & 0 & 0 & 0 & 1 & \hdots \\
0 & 0 & 0 & 0 & 0 & 1 & \hdots \\
\vdots & \vdots & \vdots & \vdots & \vdots & \vdots & \ddots
\end{array} \right)
\end{equation*}
in $\mathcal{R}$. Calculating the images of $P$ and $Q$ under the embedding map $F$ in (v), we obtain
\begin{equation*}
F(P)=\left( \begin{array}{ccccccc}
0 & 2 & 0 & 0 & 0 & 0 & \hdots \\
0 & 1 & 1 & 0 & 0 & 0 & \hdots \\
0 & 1 & 0 & 1 & 0 & 0 & \hdots \\
0 & 1 & 0 & 0 & 1 & 0 & \hdots \\
0 & 1 & 0 & 0 & 0 & 1 & \hdots \\
\vdots & \vdots & \vdots & \vdots & \vdots & \vdots & \ddots
\end{array} \right)
\quad\t{ and }\quad
F(Q)=\left( \begin{array}{ccccccc}
0 & 2 & 0 & 0 & 0 & 0 & \hdots \\
0 & 1 & 0 & 1 & 0 & 0 & \hdots \\
0 & 1 & 0 & 1 & 0 & 0 & \hdots \\
0 & 1 & 0 & 0 & 0 & 1 & \hdots \\
0 & 1 & 0 & 0 & 0 & 1 & \hdots \\
\vdots & \vdots & \vdots & \vdots & \vdots & \vdots & \ddots
\end{array} \right),
\end{equation*}
i.e.\ $P,Q\in\mathcal{N}$ by (vi).
Therefore $\hat{P},\hat{Q}\in L_{oc}^\diamond(\ell_0^\infty)$. Moreover, we have $F(P), F(Q) \in Y_{oc}\sub\mathcal{B}$. We can easily calculate the infimum of $F(P)$ and $F(Q)$ in $Y_{oc}$, as the lattice operations in $Y_{oc}$ are defined pointwise. It follows
\begin{equation*}
M_2:=F(P)\Inf F(Q)=\left( \begin{array}{ccccccc}
0 & 2 & 0 & 0 & 0 & 0 & \hdots \\
0 & 1 & 0 & 0 & 0 & 0 & \hdots \\
0 & 1 & 0 & 1 & 0 & 0 & \hdots \\
0 & 1 & 0 & 0 & 0 & 0 & \hdots \\
0 & 1 & 0 & 0 & 0 & 1 & \hdots \\
\vdots & \vdots & \vdots & \vdots & \vdots & \vdots & \ddots
\end{array} \right).
\end{equation*}
Assume that $\mathcal{N}$ is a vector lattice. Then the infimum $M:=P\Inf Q$ exists in $\mathcal{N}$. By Lemma~\ref{properties.23y}~\ref{properties.23y.eq1} it follows $F(M)=F(P\Inf Q)=F(P)\Inf F(Q)=M_2$ in $Y_{oc}$. We apply the steps (III) and (II) of (v) backwards to the matrix $M_2=(m^{(2)}_{ij})_{i\in\NN,j\in\NN_o}$ to obtain a matrix $M_1=(m^{(1)}_{ij})_{ij\in\NN_0}$. Then we show that there is no matrix $M\in\mathcal{N}$ to which we can apply step (I) of (v) to obtain $M_1$. That is, the matrix $M_2=F(P)\Inf F(Q)$ has no pre-image under the embedding $F$ and therefore $\mathcal{N}$ is not a vector lattice.

Let us apply the reversed step (III) of (v) to the matrix $M_2$, i.e.\ for every fixed row $i$ we add the row sum $\sum_{j=1}^\infty m^{(2)}_{ij}$ (starting with $j=1$) to the first entry $m^{(2)}_{i0}$ of the row $i$. Moreover, we apply the reversed step (II) of (v) by prepending a new $0$th row with not yet determined entries. Then we obtain
\begin{equation*}
M_1=\left( \begin{array}{ccccccc}
m^{(1)}_{00} & m^{(1)}_{01} & m^{(1)}_{02} & m^{(1)}_{03} & m^{(1)}_{04} & m^{(1)}_{05} & \hdots \\
2 & 2 & 0 & 0 & 0 & 0 & \hdots \\
1 & 1 & 0 & 0 & 0 & 0 & \hdots \\
2 & 1 & 0 & 1 & 0 & 0 & \hdots \\
1 & 1 & 0 & 0 & 0 & 0 & \hdots \\
2 & 1 & 0 & 0 & 0 & 1 & \hdots \\
\vdots & \vdots & \vdots & \vdots & \vdots & \vdots & \ddots
\end{array} \right).
\end{equation*}
Notice that the $0$th  column of $M_1$ is alternating. We apply the first step (I) of (v) to a matrix $M=(m_{ij})_{ij\in\NN_0}\in\mathcal{N}$. That is, we add the $0$th row of $M$ onto each other row of $M$ to obtain $M_1$. With this for the entries of the $0$th row of $M$ we obtain equations $m_{00}+m_{i0}=2$ for even $i\in\NN$ and $m_{00}+m_{i0}=1$ for odd $i\in\NN$. As $M\in\mathcal{R}$, the columns of $M$ have only finitely many non-zero entries. In particular, the $0$th column of $M$ is eventually zero. Therefore we can find an even $N\in\NN$ such that $m_{N0}=m_{(N+1)0}=0$. Then it follows $2=m_{00}+m_{N0}=m_{00}$ and $1=m_{00}+m_{(N+1)0}=m_{00}$, a contradiction. Thus the matrix $M_2=F(P)\Inf F(Q)$ has no pre-image under the embedding $F$. Therefore $\mathcal{N}$ and, consequenctly, $L_{oc}^\diamond(\ell_0^\infty)$, is not a vector lattice.
\end{example}

In the following example we return to the operator $T$ from Example~\ref{atomic.1} and show that $T$ is order continuous.

\begin{example}\label{atomic.1x}
\textit{The pre-Riesz space $L_{oc}^\diamond(\ell_0^\infty)$ is not majorizing and therefore not order dense in $L_{oc}(\ell_0^\infty,\ell^\infty)$.}

Let the operator $T\colon \ell_0^\infty\rightarrow \ell_\infty$ be as in Example~\ref{atomic.1}. We have seen that $T$ is positive and therefore regular, i.e.\ $T\in L_r(\ell_0^\infty,\ell^\infty)$.
We now establish that $T$ is order continuous. Since $T$ is positive, it is sufficient to show that for every net $(x_\alpha)_\alpha$ in $\ell_0^\infty$ the condition $x_\alpha\downarrow 0$ implies $T(x_\alpha)\downarrow 0$ in $\ell^\infty$. Let $(x_\alpha)_\alpha$ be a net in $\ell_0^\infty$ with $x_\alpha\downarrow 0$. Every $x_\alpha$ can be represented as a finite sum of elements of the algebraic basis $\mathbfcal{B}$, i.e.\ there exists an $N_\alpha\in \NN$ and real numbers $\lambda_\alpha,\lambda_\alpha^{(k)}$ for $0\leq k\leq N_\alpha$ such that
\begin{equation*}\label{yay2}
x_\alpha \hs = \hs \lambda_\alpha \1 + \sum_{k=1}^{N_\alpha} \lambda_\alpha^{(k)} e_k.
\end{equation*}
For $k\in\NN$ with $k>N_\alpha$ let $\lambda_\alpha^{(k)}:=0$.
Consider the $k$th component of the sequence $x_\alpha\in\ell_0^\infty$, i.e.\
$x_\alpha^{(k)} = \lambda_\alpha + \lambda_\alpha^{(k)}$.
Since the order on $\ell_0^\infty$ is pointwise and $x_\alpha\downarrow 0$, for the component net $(x_\alpha^{(k)})_\alpha$ of reals we have $x_\alpha^{(k)}\downarrow 0$.

By the definition of $T$ in Example~\ref{atomic.1} for every fixed $\alpha$ we obtain
\[T(x_\alpha) = (\lambda_\alpha+\lambda_\alpha^{(1)},\hs\lambda_\alpha+\lambda_\alpha^{(2)},\hs\lambda_\alpha+\lambda_\alpha^{(1)},\hs\lambda_\alpha+\lambda_\alpha^{(2)},\hs\lambda_\alpha+\lambda_\alpha^{(3)},\hs\ldots).\]
Hence for every $n\in\NN$ there exists a $k\in\NN$ such that
\[ (T(x_\alpha))^{(n)} \hs=\hs \lambda_\alpha + \lambda_\alpha^{(k)}.\]
Notice that $k$ does not depend on $\alpha$.
Thus for a fixed $n\in\NN$ we obtain a net $((T(x_\alpha))^{(n)})_\alpha$ in $\RR$ with
\[(T(x_\alpha))^{(n)} =\lambda_\alpha + \lambda_\alpha^{(k)}= x_\alpha^{(k)}  \downarrow_\alpha 0.\]
For the net $(T(x_\alpha))_\alpha$ in $\ell^\infty$ it follows $T(x_\alpha)\downarrow 0$. We conclude that the operator $T\colon \ell_0^\infty\rightarrow\ell^\infty$ is order continuous, i.e.\ $T\in L_{oc}(\ell_0^\infty,\ell^\infty)$.

By Example~\ref{atomic.1} the operator $T$ 
can not be majorized by an operator in $L_r(\ell_0^\infty)$ and, consequently, also not by an operator in $L_{oc}^\diamond(\ell_0^\infty)$. Thus $L_{oc}^\diamond(\ell_0^\infty)$ is not order dense in $L_{oc}(\ell_0^\infty,\ell^\infty)$.
\end{example}

The above example shows that even under strong additional conditions on $X$ (such as being a vector lattice with an algebraic basis and an order unit) the space $L_{oc}^\diamond(X)$ is not majorizing in $L_{oc}(X,X^\delta)$, in general.

To obtain a subspace of $L_{oc}(X,X^\delta)$ in which $L_{oc}^\diamond(X)$ is majorizing, one can consider the ideal\footnote{Since $L_{oc}^\diamond(X)$ is a directed subspace of $L_{oc}(X,X^\delta)$, the space $L_{oc}^\diamond(X)$ is majorizing in $J$.} $J$ generated by $L_{oc}^\diamond(X)$ in $L_{oc}(X,X^\delta)$.
We conjecture that for $X=\ell_0^\infty$ the ideal $J$ is a vector lattice cover of $L_{oc}^\diamond(X)$. The question remains open for which directed ordered vector spaces $X$ having the RDP and Archimedean ordered vector spaces $Z$ the (Dedekind complete) ideal $J$ generated by $L_{oc}^\diamond(X,Z)$ in $L_{oc}(X,Z^\delta)$ is a vector lattice cover of $L_{oc}^\diamond(X,Z)$. In this case, by Lemma~\ref{properties.0} the ideal $J$ would be even the Dedekind completion of $L_{oc}^\diamond(X,Z)$.


\bibliographystyle{plain}

\end{document}